\documentclass[A4paper,11pt,english]{article}
\usepackage{babel}
\usepackage{graphicx}
\usepackage{amsmath}
\usepackage[utf8]{inputenc}
\usepackage{amsfonts}
\usepackage{amssymb}
\usepackage{amsmath}
\usepackage{amsthm}
\usepackage{xcolor}
\usepackage{array}
\usepackage{cite}
\usepackage{hyperref}
\title{\bf $L^{p}$-Approximation and Shape-preserving Properties of the Max-product Generalized Sampling Operators}
\author{ {\bf Lorenzo Boccali$^{1,2}$} \hskip1cm {\bf Gianluca Vinti$^{1}$} \\ 
	$^{1}$Department of Mathematics and Computer Science \\
	University of Perugia\\
	1, Via Vanvitelli, 06123 Perugia, Italy \\ 
	$^{2}$Department of Mathematics and Computer Science ``Ulisse Dini" \\ University of Florence \\ 67/a, Viale Morgagni, 50134 Florence, Italy \\
	{\small {\tt lorenzo.boccali@unifi.it}} -  {\small {\tt gianluca.vinti@unipg.it}} }
\date{}
\newtheorem{prop}{Proposition}[section]
\newtheorem{definizione}{Definition}[section]
\newtheorem{lemma}{Lemma}[section]
\newtheorem{teorema}{Theorem}[section]

\theoremstyle{definition}
\newtheorem{remark}{Remark}[section]
\newcommand{\X}{\chi}

\newcommand{\somma}{\sum_{k=-n}^{n}}
\newcommand{\N}{\mathbb{N}}
\newcommand{\R}{\mathbb{R}}
\newcommand{\Z}{\mathbb{Z}}
\newcommand{\assolutol}{\lvert}
\newcommand{\assolutor}{\rvert}
\newcommand{\norma}{\|}
\newcommand{\op}{S_{n}^{\chi}}
\newcommand{\aconchi}{a_{\chi}}

\newcommand{\integrale}{\int_{-1}^{1}}
\newcommand{\V}{\bigvee_{k=-n}^{n}}

\begin{document}
	\maketitle
\begin{abstract}
In this paper, we investigate the convergence in the $L^{p}$-norm and certain shape-preserving properties of the max-product generalized sampling operators. More precisely, we establish quantitative estimates for the approximation error in the $L^{p}$-norm, for $ 1 \le p < +\infty$, in the case of non-negative and bounded functions defined on $[-1,1]$. These estimates are derived by means of the so-called $\tau$-modulus, an averaged modulus of smoothness introduced by Sendov and Popov. As a direct consequence, we prove that the max-product generalized sampling operators $L^{p}$-converge to non-negative functions that are measurable, bounded and Riemann integrable on the interval $[-1,1]$. In the final section, we extend several shape-preserving results of Coroianu and Gal, originally established for specific kernels (such as the sinc/Whittaker and Fejér kernels), to the broader class of smooth centered bell-shaped kernels. Under suitable assumptions on the kernel, we prove that the max-product generalized sampling operators partially preserve the monotonicity of any function $f:[0,1] \rightarrow \R_{0}^{+}$ that is either non-decreasing or non-increasing on $[0,1]$.
\end{abstract}
\medskip\noindent
{\small {\bf AMS subject classification:} 41A25, 41A05  \newline
{\small {\bf Key Words:} Max-product generalized sampling operators, Quantitative estimates, Averaged modulus of smoothness, shape-preserving properties, centered bell-shaped functions

\section{Introduction}
Over the past few years, within the field of Operator Theory, the max-product approach applied to several families of linear operators has become a research topic of increasing interest (see, e.g., \cite{kadak2022max,coroianu2022approximation,bajpeyi2024exponential}). For a comprehensive overview, see the monograph \cite{bede2016approximation}. The main motivation behind this particular focus lies in the improved approximation order obtained by replacing, in several families of discrete linear operators, the series (or sum, in the case of finite terms) with the supremum (or maximum, in the finite case), denoted by the symbol $\bigvee$, of a suitable set of real numbers. \newline Recently, the max-product version of sampling operators based upon a general kernel function $\chi$ (satisfying suitable assumptions) was introduced by Coroianu et al. \cite{coroianu2019max} and it is defined by:
\begin{equation*}
S_{n}^{\chi}(f)(x):= \frac{\displaystyle \bigvee_{k=\lceil na \rceil}^{\lfloor nb \rfloor} f\left(\frac{k}{n}\right) \chi(nx-k)}{\displaystyle \bigvee_{k=\lceil na \rceil}^{\lfloor nb \rfloor} \chi(nx-k)}, \ \ x \in [a,b], \ n \in \N^{+}, \quad \quad \quad \quad \textnormal{(I)}
\end{equation*}
for any $f:[a,b] \rightarrow \R$ bounded function. Here, the symbols $\lceil \cdot \rceil$ and $\lfloor \cdot \rfloor$ denote, respectively, the ``ceiling" and the ``integer part" of a given number. \newline The non-linear (sub-additive) operators (I) represent one possible non-linear generalization of the celebrated sampling series \cite{butzer1987approximation} introduced in the 1980s by the German mathematician P.L. Butzer. The goal of Butzer's theory was to provide an approximate version of the classical Whittaker-Kotel'nikov-Shannon (WKS) sampling theorem (see, e.g., \cite{butzer1981shannon,butzer2001sampling}), pioneer result of Sampling Theory, that would be more useful from an applied point of view. On the one hand, max-product generalized sampling operators preserve the same approximation capability for continuous functions as their corresponding linear counterpart. On the other hand, they offer the advantage of providing convergence results even when based on kernels that satisfy weaker assumptions than those in Butzer's theory, namely, kernels that are not-necessarily approximate identities (see, e.g., \cite{butzer1988sampling}). \newline Following a moment-type approach, based on defining the notion of generalized absolute moment of a given kernel as the max-product version of the usual discrete absolute moment typically used in linear approximation problems (see, e.g., \cite{coroianu2024approximation,costarelli2025regularization,costarelli2023convergence2,ries1984approximation,costarelli2023alzheimer}), the authors in \cite{coroianu2019max} obtained pointwise and uniform convergence, along with quantitative estimates, in the space of (non-negative) continuous functions $f:[a,b] \rightarrow \R_{0}^{+}$, or, respectively, in the space of uniformly continuous and bounded functions $f: \R \rightarrow \R_{0}^{+}$.  
\newline  However, the study of the approximation properties for the operators $S_{n}^{\chi}$ with respect to the $L^{p}$-norm remains an uncovered topic. Based on this consideration, in the first part of the present paper, we investigate the power of the max-product operators (I) in approximating (non-negative) bounded functions belonging to $L^{p}$-spaces, for $1 \le p < +\infty$. However, this choice introduces some technical problems. A primary difficulty arises from their own definition: since the evaluation of the signal $f$ occurs at the nodes $k/n$, for $k=\lceil na \rceil, ..., \lfloor nb \rfloor$, $n \in \N^{+}$, the operators considered in this paper are less suitable for reconstructing functions that are not-necessarily continuous than their Kantorovich variant \cite{coroianu2021approximation}. This issue stems from the fact that any family of discrete operators that depend on the pointwise behavior of the function may map different functions belonging to the same equivalence class to distinct classes. Consequently, the max-product generalized sampling operators $S_{n}^{\chi}f$ do not necessarily converge to $f$ with respect to the $L^{p}$-norm. Moreover, the usual $L^{p}$-setting is not suitable for studying pointwise-dependent operators, since the identification of functions via equivalence classes becomes useless in these cases. To overcome this limitation for the theory, we adopt in this paper a not-usual definition of the $L^{p}$-setting, widely used in the literature when dealing with approximation processes where one directly deals with the pointwise values of the function at a finite numbers of points (see, e.g., \cite{costarelli2023approximation3,bardaro2006approximation}). \newline Within this framework, we provide quantitative estimates for the approximation error with respect to the usual norm $\| \cdot \|_{p}$ by exploiting the so-called $\tau$-modulus, an averaged modulus of smoothness introduced by the Bulgarian school under Sendov \cite{sendov1988averaged}. This tool, defined for bounded functions, has been extensively used in Approximation Theory to derive $L^{p}$-approximation error estimates for several pointwise-dependent operators, such as the classical (linear) Bernstein, Sz\'asz-Mirakjan and Baskakov operators (see \cite{sendov1988averaged}, Chapter 4). To achieve the desired purpose, after recalling in Section \ref{Sezione2} the definition of the max-product operators (I) and the known results used in Section \ref{Sezione3}, we first prove a Lipschitz property with respect to $\| \cdot \|_{p}$ for the approximation of bounded, measurable, and non-negative functions. Then, using this preliminary result along with a quantitative estimate of the approximation error, established for (non-negative) functions belonging to Sobolev space $W^{1}_{p}([-1,1])$, we are able to extend the same estimate to any non-negative and bounded function $f \in L^{p}([-1,1])$ by density approach. For $p>1$, the proof of the estimate in the Sobolev setting is based on the well-known Hardy-Littlewood maximal function (see, e.g., \cite{aldaz2012optimal}). On the other hand, for $p=1$, we assume that the kernels are compactly supported to obtain an analogous result. In both cases, the main estimate, combined with the well-known properties of the $\tau$-modulus, ensures that the family $(S_{n}^{\chi}(f))_{n \in \N^{+}}$ converges to $f$ in  $L^{p}([-1,1])$ whenever $f$ is measurable, bounded, non-negative, and Riemann integrable on the interval $[-1,1]$.  
\newline In addition to providing good approximation capabilities, max-product operators generally exhibit interesting shape-preserving properties (see, e.g., \cite{bede2009approximation,bede2010approximation,mansoori2025approximation}). In \cite{coroianu2011approximation}, the authors proved that the max-product versions of the truncated Whittaker cardinal series and the truncated sampling operator based upon the Fejér kernel partially preserve monotonicity. The goal of Section \ref{Sezione4} is to find, following the same approach, a broader class of kernels that conserve the same shape-preserving properties. In this paper, we show that it is sufficient to consider kernels satisfying the notion of a (smooth) centered bell-shaped function, as introduced by Cardaliaguet and Euvrard in \cite{cardaliaguet1992approximation}. More precisely, under additional assumptions on the kernel function, we prove that the family $(S_{n}^{\chi}(f))_{n \in \N^{+}}$ partially preserves monotonicity for any bounded function $f:[0,1] \rightarrow \R_{0}^{+}$ that is either non-decreasing or non-increasing on $[0,1]$. \newline At the end of the paper, we present some examples of kernel functions to which the above theory applies.  

\section{Preliminaries, notations and known results \label{Sezione2}}
We begin this section by recalling a useful definition, which is crucial for defining the kind of sampling operators studied in this paper. Following the usual notation in the literature (see, e.g., \cite{yuksel2018approximation,anastassiou2018nonlinearity,boccali2024max,costarelli2019convergence,bede2016approximation}), we denote by $\bigvee$ the max-product symbol, defined as follows:   
\begin{equation}
\label{max-productsymbol}
\bigvee_{k=K_{1}}^{K_{2}}A_{k}:=\max\{A_{k} \in \R: k=K_{1},...,K_{2}\},
\end{equation}
for any pair of integers $K_{1} \le K_{2}$. Similarly, we define $\bigvee_{k \in \Z}$ by replacing the maximum with the supremum, and $k=K_{1},...,K_{2}$ with $k \in \Z$ on the right-hand side of (\ref{max-productsymbol}).  \newline Moreover, in order to introduce the definition of max-product generalized sampling operators, we recall the notion of kernel function introduced in \cite{coroianu2019max} to establish qualitative and quantitative pointwise and uniform convergence properties in spaces of continuous functions. From now on, we consider kernels of the form $\chi: \R \rightarrow \R$, where $\chi$ is any bounded and measurable function satisfying the following properties: 
\vskip0.2cm $(\chi1)$ For a suitable $\beta>0$, there holds:
\begin{equation*}
	m_{\beta}(\chi):=\sup_{x \in \R} \bigvee_{k \in \Z} \lvert \chi(x-k) \rvert \cdot \lvert x-k \rvert^{\beta} = \sup_{u \in \R} \lvert \chi(u) \rvert \lvert u \rvert^{\beta} < +\infty, 
\end{equation*}
\hskip0.6cm i.e., the generalized absolute moment of order $\beta$ of $\chi$ is finite;
\vskip0.05cm $(\chi2)$ $\chi(x)$ is bounded on $\left[-1, 1\right]$ with strictly positive lower bound, i.e.,
\begin{equation*}
	\inf_{x \in \left[-1, 1 \right]} \chi(x)=:a_{\chi}>0.
\end{equation*}
Note that, in this context, we do not require the kernel $\chi$ to satisfy the usual assumptions of discrete approximate identities, which are instead necessary according in Butzer's theory (see, e.g., \cite{butzer1987approximation,butzer1990generalized}).
\newline In what follows, we summarize some known results that relate the kernel function $\chi$ to the max-product symbol $\bigvee$.
\begin{lemma}
\label{condizionesufficientemomenti}
If $\chi:\R \rightarrow \R$ is bounded and such that $\chi(x)=\mathcal{O}(\lvert x \rvert^{-\alpha})$, as $\assolutol x \assolutor \rightarrow +\infty$, for $\alpha >0$, then:
\begin{equation*}
m_{\beta}(\chi)<+\infty, \ for \ every \ \ 0 \le \beta \le \alpha.
\end{equation*}
\end{lemma}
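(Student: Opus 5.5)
We use the equivalent form of the generalized absolute moment already recorded in $(\chi1)$, namely
\[
m_{\beta}(\chi)=\sup_{u\in\R}\lvert\chi(u)\rvert\,\lvert u\rvert^{\beta}.
\]
This holds because, as $x$ ranges over $\R$ and $k$ over $\Z$, the quantity $u=x-k$ ranges over all of $\R$. So it suffices to show that $u\mapsto\lvert\chi(u)\rvert\lvert u\rvert^{\beta}$ is bounded on $\R$ for each fixed $0\le\beta\le\alpha$.

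By hypothesis there exist constants $M>0$ and $R\ge 1$ such that $\lvert\chi(u)\rvert\le M\lvert u\rvert^{-\alpha}$ for $\lvert u\rvert\ge R$. Enlarging $R$ to be at least $1$ costs nothing. Since $\chi$ is bounded, we also set $\|\chi\|_{\infty}:=\sup_{u\in\R}\lvert\chi(u)\rvert<+\infty$. We then split $\R$ into the compact part $\lvert u\rvert\le R$ and the tail $\lvert u\rvert> R$.

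On $\lvert u\rvert\le R$ we have $\lvert\chi(u)\rvert\lvert u\rvert^{\beta}\le\|\chi\|_{\infty}R^{\beta}$. In the case $\beta=0$ we use the convention $\lvert u\rvert^{0}=1$, and the bound reads simply $\|\chi\|_{\infty}$.

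On $\lvert u\rvert>R\ge1$ we have $\lvert\chi(u)\rvert\lvert u\rvert^{\beta}\le M\lvert u\rvert^{\beta-\alpha}\le M$. The last inequality holds because $\beta-\alpha\le0$ and $\lvert u\rvert\ge1$. This is the only point where the restriction $\beta\le\alpha$ enters, and it is also where the normalization $R\ge1$ is needed.

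Taking the larger of the two bounds gives
\[
m_{\beta}(\chi)\le\max\{\|\chi\|_{\infty}R^{\beta},\,M\}<+\infty .
\]
There is no real obstacle in this argument. The only points requiring care are the reduction of the double supremum in $(\chi1)$ to a single supremum over $u$, and the choice $R\ge1$, which ensures that the power $\lvert u\rvert^{\beta-\alpha}$ is at most $1$ on the tail.
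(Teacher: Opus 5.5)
Your proof is correct and is essentially what the paper intends. The paper gives no self-contained proof; it only says to replace the sum by $\bigvee$ in the classical lemma on discrete absolute moments, and that lemma's proof is exactly your split into a bounded region (controlled by $\|\chi\|_{\infty}$) and a tail (controlled by $|u|^{\beta-\alpha}$), applied after the reduction to $\sup_{u}|\chi(u)||u|^{\beta}$ already recorded in $(\chi1)$. A minor point: the normalization $R\ge 1$ is convenient but not actually needed, since $|u|^{\beta-\alpha}\le R^{\beta-\alpha}$ on $|u|\ge R$ for any $R>0$.
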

Lemma \ref{condizionesufficientemomenti} provides a sufficient condition for the existence of generalized absolute moments. Its proof can be easily deduced by replacing the sum (or series) with the symbol $\bigvee$ in a lemma originally proved in \cite{ries1984approximation} concerning classical discrete absolute moments. 
\begin{lemma}[see Lemma 2.4 of \cite{costarelli2016max}]
\label{lemma1.2}
Let $\chi:\R \rightarrow \R$ be a bounded function satisfying $(\chi1)$ with $\beta >0$. Then: 
\begin{equation*}
m_{\upsilon}(\chi) < +\infty, \ for \ every \ \  0 \le \upsilon \le \beta.
\end{equation*}
In particular, it turns out that $m_{0}(\chi) \le \| \chi \|_{\infty}$, where, from now on, $\| \cdot \|_{\infty}$ denotes the usual sup-norm. 
\end{lemma}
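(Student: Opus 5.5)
The plan is to prove the bound $|\chi(u)|\,|u|^{\upsilon}\le \max\{\|\chi\|_{\infty},\, m_{\beta}(\chi)\}$ pointwise in $u$, splitting according to whether $|u|$ is small or large. First, by the identity built into $(\chi1)$, the generalized absolute moment of any order $\upsilon\ge 0$ can be written as
\begin{equation*}
m_{\upsilon}(\chi)=\sup_{x\in\R}\bigvee_{k\in\Z}\lvert\chi(x-k)\rvert\,\lvert x-k\rvert^{\upsilon}=\sup_{u\in\R}\lvert\chi(u)\rvert\,\lvert u\rvert^{\upsilon}.
\end{equation*}
This holds because, as $x$ ranges over $\R$ and $k$ over $\Z$, the quantity $u=x-k$ ranges over all of $\R$. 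So it suffices to bound $\lvert\chi(u)\rvert\,\lvert u\rvert^{\upsilon}$ uniformly in $u$.

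I would treat $\upsilon=0$ separately, with the convention $\lvert u\rvert^{0}=1$ (also at $u=0$). In that case $m_{0}(\chi)=\sup_{u}\lvert\chi(u)\rvert=\|\chi\|_{\infty}$, which gives the claimed inequality $m_{0}(\chi)\le\|\chi\|_{\infty}$, in fact with equality. For $0<\upsilon\le\beta$ there are two cases. If $\lvert u\rvert\le1$, then $\lvert u\rvert^{\upsilon}\le1$, so $\lvert\chi(u)\rvert\,\lvert u\rvert^{\upsilon}\le\|\chi\|_{\infty}$. If $\lvert u\rvert>1$, then $\upsilon\le\beta$ gives $\lvert u\rvert^{\upsilon}\le\lvert u\rvert^{\beta}$, and hence $\lvert\chi(u)\rvert\,\lvert u\rvert^{\upsilon}\le\lvert\chi(u)\rvert\,\lvert u\rvert^{\beta}\le m_{\beta}(\chi)$. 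Taking the supremum over $u$ yields
\begin{equation*}
m_{\upsilon}(\chi)\le\max\{\|\chi\|_{\infty},\,m_{\beta}(\chi)\}<+\infty .
\end{equation*}

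There is no real obstacle here. The only point needing care is the rewriting of the supremum over $(x,k)$ as a supremum over $u\in\R$, which is exactly the equality already built into $(\chi1)$, together with the $0^{0}$ convention in the case $\upsilon=0$.
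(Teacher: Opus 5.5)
Your argument is correct and is essentially the standard proof behind the cited Lemma 2.4 of \cite{costarelli2016max}; the paper itself only quotes that result and gives no proof. Splitting into $\lvert u\rvert\le 1$ and $\lvert u\rvert>1$ gives $m_{\upsilon}(\chi)\le\max\{\|\chi\|_{\infty},m_{\beta}(\chi)\}$ for $0<\upsilon\le\beta$, and your remark that $m_{0}(\chi)=\|\chi\|_{\infty}$ is also right, because $\|\cdot\|_{\infty}$ here is the pointwise supremum rather than an essential supremum.
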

\begin{lemma}
\label{disuguaglianzacruciale}
Let $\chi: \R \rightarrow \R$ be a given function satisfying $(\chi2)$ and let $a<b$ be fixed. Then for every $x \in [a, b] \subset \R$ and $n \in \N^{+}$, we have:
\begin{equation}
\label{disuguaglianzaaconchi}
\bigvee_{k=\lceil na \rceil}^{\lfloor nb \rfloor} \chi(nx-k) \ge a_{\chi}>0,
\end{equation}
where, from now on, $\lceil \cdot \rceil$ and $\lfloor \cdot \rfloor$ denote respectively the  ``ceiling" and the  ``integer part" of a given number, and $a_{\chi}$ is the constant arising from condition $(\chi2)$. 
\end{lemma}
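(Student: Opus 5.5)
The plan is to exhibit, for each fixed $x\in[a,b]$ and $n\in\N^{+}$, a single index $k_{0}$ in the range $\lceil na\rceil,\dots,\lfloor nb\rfloor$ with $\lvert nx-k_{0}\rvert\le 1$. Once we have it, $(\chi2)$ gives $\chi(nx-k_{0})\ge a_{\chi}$. Since the maximum over the index set dominates each of its terms,
\begin{equation*}
\bigvee_{k=\lceil na \rceil}^{\lfloor nb \rfloor} \chi(nx-k) \ge \chi(nx-k_{0}) \ge a_{\chi}>0 ,
\end{equation*}
which is (\ref{disuguaglianzaaconchi}).

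To construct $k_{0}$, I set $k_{0}:=\lfloor nx\rfloor$, so that $0\le nx-k_{0}<1$. Because $nx\le nb$ and $\lfloor\cdot\rfloor$ is monotone, we get $k_{0}\le\lfloor nb\rfloor$. The lower bound $k_{0}\ge\lceil na\rceil$ can fail, for instance when $x=a$ and $na\notin\Z$. In that case I would take instead $k_{0}:=\lceil nx\rceil$, which satisfies $0\le k_{0}-nx<1$ and $k_{0}\ge\lceil na\rceil$.

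The one point needing care is that the index range is nonempty and contains a suitable integer; this is where I expect the only real obstacle. If $\lfloor nx\rfloor<\lceil na\rceil$, then $nx\in(\lceil na\rceil-1,\lceil na\rceil)$, since $nx\ge na>\lceil na\rceil-1$. Hence $\lceil nx\rceil=\lceil na\rceil$, and it remains to show $\lceil na\rceil\le\lfloor nb\rfloor$, i.e.\ that $[na,nb]$ contains an integer. This holds automatically when $n(b-a)\ge1$, but not for every $n$ if $b-a<1$. For instance, $[a,b]=[0.2,0.4]$ with $n=1$ gives an empty range.

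I would therefore follow the source lemma in \cite{coroianu2019max} and state, or tacitly assume, that $n$ is large enough that $n(b-a)\ge1$. Alternatively, I would note that the definition (I) of $S_{n}^{\chi}$ only makes sense when the index set is nonempty, so this assumption is implicit. In the paper's setting $[a,b]=[-1,1]$ (and later $[0,1]$), $b-a\ge1$ holds for all $n\in\N^{+}$ and no restriction is needed.

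Under that assumption, $k_{0}=\lceil na\rceil$ lies in the range. It also satisfies $\lvert nx-k_{0}\rvert<1$, because $nx\in[\lceil na\rceil-1,\lceil na\rceil]$. The argument above then applies with $k_{0}$ in either case, completing the proof.
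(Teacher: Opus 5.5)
Your proof is correct and follows the same route as the paper: choose an index $\overline{k}$ in the range with $\lvert nx-\overline{k}\rvert\le 1$, then bound the maximum below by $\chi(nx-\overline{k})\ge a_{\chi}$ using $(\chi2)$. Your explicit construction also correctly exposes a point the paper passes over: the index range $\lceil na\rceil,\dots,\lfloor nb\rfloor$ must be nonempty (it is empty, e.g., for $[a,b]=[0.2,0.4]$, $n=1$), which is automatic when $n(b-a)\ge 1$ and hence holds in the paper's settings $[-1,1]$ and $[0,1]$.
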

\begin{proof}
Let $x \in [a,b]$ and $n \in \N^{+}$ be fixed. Since there exists at least a $\overline{k}=\lceil na \rceil,...,\lfloor nb \rfloor$ such that $\assolutol nx - \overline{k} \assolutor \le 1$, by using $(\chi2)$, we immediately obtain:
\begin{equation*}
\bigvee_{k=\lceil na \rceil}^{\lfloor nb \rfloor} \chi(nx-k) \ge \chi(nx-\overline{k}) \ge a_{\chi}>0.
\end{equation*}
This completes the proof. 
\end{proof}
The inequality established in Lemma \ref{disuguaglianzacruciale} plays a crucial role in the well-definition of the family of sampling operators studied in the following sections, as becomes evident from:
\begin{definizione}
\label{definizione1.1}
Let $f: [a,b] \rightarrow \R$ be a bounded function and let $\chi$ be a kernel satisfying properties $(\chi1)$ and $(\chi2)$. Then the max-product generalized sampling operators acting on $f$ and based upon $\chi$ are defined by:
\begin{equation}
\label{definizioneoperatori}
S_{n}^{\chi}(f)(x):= \frac{\displaystyle \bigvee_{k=\lceil na \rceil}^{\lfloor nb \rfloor} f\left(\frac{k}{n}\right) \chi(nx-k)}{\displaystyle \bigvee_{k=\lceil na \rceil}^{\lfloor nb \rfloor} \chi(nx-k)}, \ \ x \in [a,b], 
\end{equation} 
for any $n \in \N^{+}$. 
\end{definizione}
Note that, since Lemma \ref{disuguaglianzacruciale} holds, the denominator in the right-hand side of (\ref{definizioneoperatori}) is always strictly positive. Moreover, since $f$ is bounded, we have: 
\begin{equation*}
\assolutol S_{n}^{\chi}(f)(x) \assolutor \le  \frac{\norma f \norma_{\infty}}{a_{\chi}} m_{0}(\chi) < +\infty, \ \ x \in [a,b], 
\end{equation*}
where the zero-order generalized absolute moment is finite in view of Lemma \ref{lemma1.2}. Therefore, the above operators are well-defined. \newline The max-product operators in (\ref{definizioneoperatori}) originally appeared in Coroianu et al. \cite{coroianu2010approximation}, where the authors considered only compactly supported kernels or specific cases (see also \cite{coroianu2011approximation}), such as sinc and Fejér kernels. Definition \ref{definizione1.1}, which extends the theory to any general kernel function (not-necessarily compactly supported) satisfying the (weak) assumptions $(\chi1)$ and $(\chi2)$, was later introduced in \cite{coroianu2019max}. In that paper, pointwise and uniform convergence for the operators $S_{n}^{\chi}$ was established for functions $f:[a,b] \rightarrow \R^{+}_{0}$ or $f:\R \rightarrow \R^{+}_{0}$, which are continuous or uniformly continuous and bounded, respectively. In many cases (see, e.g., \cite{coroianu2011classes,bede2016approximation,coroianu2011approximation}), the max-product version of a family of linear operators achieves a finer approximation, expressed by a better order of approximation than its linear counterpart. This advantage also applies to generalized sampling operators, for which a Jackson-type estimate in terms of the modulus of continuity of the approximated function has been proved in \cite{coroianu2019max}. \newline Moreover, when dealing with max-product-type operators, it is common to establish certain properties that will be particularly useful in the next section for analyzing the approximation power of the operators $S_{n}^{\chi}$ in $L^{p}$-spaces, with $1 \le p < +\infty$. We summarize these properties in the following lemma. For a proof, see Lemma 3.2 of \cite{coroianu2019max}.
\begin{lemma}
\label{lemma1.4}
Let $\chi$ be a kernel satisfying assumptions $(\chi1)$ and $(\chi2)$. Further, let $a<b$ be fixed. If $f,g: [a,b] \rightarrow \R_{0}^{+}$ are two bounded functions, then for any $n \in \N^{+}$, we have:
\vskip0.2cm $(i)$ If $f(x) \le g(x)$, for every $x \in [a,b]$, then $S_{n}^{\chi}(f)(x) \le S_{n}^{\chi}(g)(x)$, for each $x \in [a,b]$. 
\vskip0.1cm $(ii)$ $S_{n}^{\chi}$ is sub-additive (or sub-linear) operator, i.e., $S_{n}^{\chi}(f+g)(x) \le S_{n}^{\chi}(f)(x) + S_{n}^{\chi}(g)(x)$, for \vskip0.01cm $x \in [a,b]$.
\vskip0.1cm $(iii)$ $\assolutol S_{n}^{\chi}(f)(x)-S_{n}^{\chi}(g)(x) \assolutor \le S_{n}^{\chi}(\assolutol f-g\assolutor)(x)$, for each $x \in [a,b]$.
\vskip0.1cm $(iv)$ $S_{n}^{\chi}$ is positively homogeneous operator, i.e., $S_{n}^{\chi}(\lambda f)(x)=\lambda S_{n}^{\chi}(f)(x)$, for each $\lambda>0$ and \vskip0.01cm for all $x \in [a,b]$. 
\end{lemma}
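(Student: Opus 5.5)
The four properties follow directly from the definition of $S_{n}^{\chi}$ and from elementary properties of the maximum of finitely many non-negative reals. Fix $n \in \N^{+}$ and $x \in [a,b]$, and set $D(x):=\bigvee_{k=\lceil na \rceil}^{\lfloor nb \rfloor} \chi(nx-k)$. By Lemma \ref{disuguaglianzacruciale}, $D(x) \ge a_{\chi}>0$, so each property can be proved on the numerators and then divided by $D(x)$. One remark is needed first. The kernel $\chi$ may take negative values away from $[-1,1]$, so some terms $f(k/n)\chi(nx-k)$ may be negative. However, the index $\overline{k}$ from Lemma \ref{disuguaglianzacruciale} gives $\chi(nx-\overline{k}) \ge a_{\chi}>0$ and $f(\overline{k}/n) \ge 0$. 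Hence every numerator $N_{f}(x):=\bigvee_{k} f(k/n)\chi(nx-k)$ with $f \ge 0$ is $\ge 0$, and negative terms never realize the maximum unless all terms are $\le 0$. I will use the order and additivity facts for $\bigvee$ in this setting.

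For $(i)$: if $\chi(nx-k) \ge 0$, then $f(k/n)\chi(nx-k) \le g(k/n)\chi(nx-k)$. If $\chi(nx-k)<0$, the $f$-term is $\le 0 \le N_{g}(x)$. So every term of $N_{f}$ is bounded by $N_{g}$, which gives $N_{f} \le N_{g}$. For $(iv)$: multiplication by $\lambda>0$ commutes with the maximum, so $N_{\lambda f}=\lambda N_{f}$. For $(ii)$: for each $k$ with $\chi(nx-k)\ge 0$,
\[
(f+g)(k/n)\chi(nx-k)=f(k/n)\chi(nx-k)+g(k/n)\chi(nx-k) \le N_{f}(x)+N_{g}(x).
\]
For $k$ with $\chi(nx-k)<0$, the term is $\le 0 \le N_{f}+N_{g}$. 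Taking the maximum over $k$ gives $N_{f+g} \le N_{f}+N_{g}$.

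For $(iii)$, apply $(i)$ and $(ii)$ to the pointwise inequality $f \le g + |f-g|$, where all three functions are non-negative and bounded. This gives $S_{n}^{\chi}(f) \le S_{n}^{\chi}(g)+S_{n}^{\chi}(|f-g|)$. Exchanging the roles of $f$ and $g$ gives the symmetric inequality, and together they yield the absolute-value bound. The only delicate point in the whole argument is the possible negativity of $\chi$, which is handled by the non-negativity of the numerators established at the start. Everything else is routine manipulation of finite maxima.
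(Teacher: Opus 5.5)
Your proof is correct and follows the standard argument. The paper gives no proof of its own and cites Lemma 3.2 of \cite{coroianu2019max}, where $(i)$ and $(iv)$ come from the definition, $(ii)$ from $\bigvee_k (a_k+b_k)\le \bigvee_k a_k+\bigvee_k b_k$ together with $D(x)\ge a_{\chi}>0$, and $(iii)$ from $(i)$--$(ii)$ applied to $f\le g+|f-g|$ and its symmetric counterpart. Your use of the index $\overline{k}$ from Lemma \ref{disuguaglianzacruciale} to handle possibly negative values of $\chi$ is genuinely needed for $(i)$. The case split in $(ii)$ is not needed, because the inequality for maxima holds for arbitrary reals.
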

The goal of the next section is to investigate the approximation capabilities of the max-product generalized sampling operators defined in (\ref{definizioneoperatori}) with respect to $L^{p}$-convergence. This can be achieved by following an approach commonly adopted in the literature (see, e.g., \cite{bardaro2006approximation,costarelli2023approximation3,sendov1988averaged}) when dealing with the evaluation of discrete operators in the case of functions belonging to $L^{p}$-spaces. The key idea is to deviate from the usual definition of $L^{p}$-spaces, where functions that coincide almost everywhere are identified as belonging to the same equivalence class. Instead, in this context, we separate each bounded measurable function from its own equivalence class. The main motivation behind this choice stems from the fact that the definition of the sampling operators in (\ref{definizioneoperatori}) depends on the evaluation of the signal $f$ at the nodes $k/n$, where $k=\lceil na \rceil,..., \lfloor nb \rfloor$ and $n \in \N^{+}$. Since these values form a zero-measure set, the operators $S_{n}^{\chi}$ do not generally converge to $f$ in the $L^{p}$-norm. In view of this observation, from now on, each function defined on a bounded interval $[a,b]$ is uniquely determined by its pointwise values. Therefore, we do not identify functions that coincide almost everywhere on $[a,b]$, i.e., we do not work with equivalence classes of functions. Consequently, $\|\cdot\|_{p}$ defines only a semi-norm on $L^{p}([a,b])$.     
\newline On the other hand, the pointwise nature of the max-product generalized sampling operators influences the choice of the tool used to establish approximation results in the above context. In particular, if the goal of Section \ref{Sezione3} is to estimate the approximation error for the operators $S_{n}^{\chi}$ with respect to the $L^{p}$-norm, then even for functions that are not-necessarily continuous, the classical $L^{p}$-modulus of smoothness is not the most suitable tool for evaluating the increments of $f$ on subsets of $[a,b]$ with null measure. To overcome this issue, it is preferable to use an averaged version of the modulus of smoothness, known in the literature as the first-order $\tau$-modulus, introduced by E.P. Dolzenko (1969) and E.A. Sevast'janov (1976) \cite{Dolzenko} for estimating the best piecewise monotone approximations with respect to the Hausdorff distance (see \cite{sendov1979}). In order to introduce this definition, we first recall the well-known notion of the local modulus of smoothness for functions $f \in M([a,b])$, where, from now on, $M([a,b])$ denotes the set of all bounded and measurable functions $f:[a,b] \rightarrow \R$.   
\begin{definizione}
Let $f\in M([a,b])$ be fixed. We define the local modulus of smoothness of order $r \in \N^{+}$ of the function $f$ at a point $x \in [a,b]$ as follows:
\begin{equation}
\omega_{r}(f,x;\delta):=\sup\left\{\assolutol(\Delta_{h}^{r}f)(t)\assolutor: t, t+rh \in \left[x-\frac{r\delta}{2}, x+\frac{r\delta}{2}\right] \cap [a,b]\right\},
\end{equation}
where $0 < \delta \le \frac{b-a}{r}$, and
\begin{equation*}
(\Delta_{h}^{r}f)(t):=\sum_{k=0}^{r}\binom{r}{k}(-1)^{r-k}f(t+kh).
\end{equation*} 
\end{definizione}
Considered as a function of $x \in [a,b]$, the local modulus of smoothness satisfies $\omega_{r}(f,x;\delta) \in M([a,b])$ (see Theorem 1.3 of \cite{sendov1988averaged}). It follows immediately that $\omega_{r}(f,\cdot;\delta)$ is Lebesgue integrable to the $p$-th power, i.e., $\omega_{r}(f,\cdot;\delta) \in L^{p}([a,b])$ for $1 \le p < +\infty$. This observation leads to the following definition:
\begin{definizione}
\label{definizione3.1}
Let $f \in M([a,b])$ be fixed. The averaged modulus of smoothness (or $\tau$-modulus) of order $r \in \N^{+}$ of the function $f$ is defined by:
\begin{equation}
\label{taumodulo}
\tau_{r}(f,\delta)_{p}:=\| \omega_{r}(f, \cdot;\delta)\|_{p}=\left\{\int_{a}^{b} [\omega_{r}(f,x;\delta)]^{p} \ dx\right\}^{1/p}, \ 1 \le p < +\infty,
\end{equation}
for $0 < \delta \le \frac{b-a}{r}$.
\end{definizione} 
The $\tau$-moduli were originally introduced by B. Sendov \cite{sendov1968approximations} and P.P. Korovkin \cite{korovkin} in a different but equivalent form to investigate the convergence of sequences of positive linear operators with respect to different metrics \cite{sensov1977}. Subsequently, they were formulated as in Definition \ref{definizione3.1} by the Bulgarian school led by B. Sendov \cite{sendov1988averaged}, which developed a very detailed theory of these moduli. A notable peculiarity of $\tau_{r}(f,\delta)_{p}$ is that it allows one to evaluate the increments of a given function $f$ at any fixed point $x \in [a,b]$, in contrast to the usual integral (or $L^{p}$-)moduli of smoothness, which do not capture the increments of $f$ on subsets of $[a,b]$ of measure zero.
\newline In what follows, we recall some useful properties of the $\tau$-modulus established in \cite{sendov1988averaged}:
\vskip0.2cm $(i)$ monotonicity: \[\tau_{r}(f,\delta')_{p} \le \tau_{r}(f,\delta'')_{p}, \ \textnormal{for} \ \ 0 < \delta' \le \delta'';\]
\vskip0.1cm $(ii)$ sub-additivity: 
\[\tau_{r}(f+g, \delta)_{p} \le \tau_{r}(f, \delta)_{p}+\tau_{r}(g, \delta)_{p}, \ \ \delta>0;\]  
\vskip0.1cm $(iii)$ estimate of higher order modulus by means of lower order one:
\[\tau_{r}(f,\delta)_{p} \le 2 \tau_{r-1}\left(f,\frac{r}{r-1} \ \delta\right)_{p}, \ \ \delta>0, \ r \ge 2;\]
\vskip0.1cm $(iv)$ estimate of the modulus of order $r\ge 2$ of $f$ by one of the order $r-1$ of the derivative $f'$ \vskip0.01cm (whenever it exists):
\[\tau_{r}(f,\delta)_{p} \le \delta \tau_{r-1}\left(f',\frac{r}{r-1} \ \delta\right)_{p}, \ \ \delta >0; \]
\vskip0.1cm $(v)$ inequality with respect to the product of the parameter $\delta> 0$ by positive integers:
\[\tau_{r}(f,n \delta)_{p} \le (2n)^{r+1}\tau_{r}(f, \delta)_{p}, \ \ n \in \N^{+}.\]
In the final part of this section, we provide some well-known results that highlight important properties of the $\tau$-modulus, which will play a crucial role in the next section.  We begin with the following lemma, proved in Lemma 4.3 of \cite{costarelli2023approximation3} from a slight modification of Lemma 2.5 of \cite{sendov1988averaged}.  
\begin{lemma}
\label{Lemma1.5}
Let $r \in \N^{+}$ and $1 \le p < +\infty$ be fixed. It turns out that:
\begin{equation}
\left\{\sum_{k=-n}^{n}\left[\omega_{r}\left(f, \frac{k}{n}, 2h\right)\right]^{p}n^{-1}\right\}^{1/p} \le 2^{\frac{1}{p}+2(r+1)} \tau_{r}\left(f, h +\frac{1}{(n+1)r}\right)_{p}, \ \ n \in \N^{+},
\end{equation}
for all $0 < h \le \frac{2}{r}$. 
\end{lemma}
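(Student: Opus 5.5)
The plan is to compare each summand with an integral of the $\tau$-modulus integrand over a small interval near the node. Here the interval is $[a,b]=[-1,1]$, the nodes are $k/n$ with $k=-n,\dots,n$, and each node carries weight $n^{-1}$. Fix $k$ and set $x_k=k/n$.

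The first step is pointwise domination: for every $x$ with $|x-x_k|$ small, one shows
\[
\omega_r(f,x_k;2h)\le \omega_r(f,x;2h+\varepsilon).
\]
This holds because $\omega_r(f,x;\delta)$ is a supremum over differences $\Delta_h^r f(t)$ with $t,t+rh$ in the window $[x-r\delta/2,x+r\delta/2]\cap[-1,1]$. The window centred at $x_k$ of half-width $r h$ is contained in the window centred at $x$ of half-width $rh+|x-x_k|$. Choosing $x$ in an interval $I_k$ of length about $1/(n+1)$ adjacent to $x_k$ and inside $[-1,1]$ gives $|x-x_k|\le 1/(n+1)$. Then the enlarged parameter is $\delta'=2h+\frac{2}{(n+1)r}=2\bigl(h+\frac{1}{(n+1)r}\bigr)$.

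The second step is to integrate. Because the pointwise bound holds for every $x\in I_k$,
\[
[\omega_r(f,x_k;2h)]^p\le (n+1)\int_{I_k}[\omega_r(f,x;\delta')]^p\,dx .
\]
The intervals $I_k$ can be chosen to be essentially disjoint, taking them to the right of $x_k$ for $k<n$ and to the left for $k=n$. Each point of $[-1,1]$ then lies in at most two of them. Multiplying by $n^{-1}$ and summing over $k$ gives
\[
\sum_k[\omega_r(f,x_k;2h)]^p\,n^{-1}\le \frac{2(n+1)}{n}\,\tau_r(f,\delta')_p^p\le 4\,\tau_r(f,\delta')_p^p .
\]
This is where the factor $2^{1/p}$, up to a harmless power of $2$, comes from.

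The third step removes the factor $2$ in $\delta'=2(h+\frac{1}{(n+1)r})$ using property $(v)$ with multiplier $2$:
\[
\tau_r(f,2\eta)_p\le 4^{r+1}\tau_r(f,\eta)_p=2^{2(r+1)}\tau_r(f,\eta)_p .
\]
Taking $p$-th roots then yields the stated constant $2^{\frac1p+2(r+1)}$, possibly after absorbing the factor $4$ from the previous step by a more careful choice of $I_k$ of length $1/n$ with overlap at most one.

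The main obstacle is the bookkeeping in the last two steps. First, the $\delta$-range must stay admissible: $\delta'\le 2/r$ is needed for the modulus to be defined, and this may require using monotonicity or the convention of extending windows intersected with $[-1,1]$. Second, the exact multiplicity constant must come out right. I would first try $I_k=[x_k,x_k+\frac1{n+1}]$, which are disjoint since $\frac1{n+1}<\frac1n$, together with $I_n=[1-\frac1{n+1},1]$, which overlaps only $I_{n-1}$. That gives a total factor $\frac{2(n+1)}{n}$, and I would check whether it fits within $2\cdot 2^{p(\ldots)}$ or whether a sharper argument is needed.
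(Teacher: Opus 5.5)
Your Steps~1 and~3 are sound. For $|x-\frac kn|\le\frac1{n+1}$, the window defining $\omega_r(f,\frac kn;2h)$ is contained in the window defining $\omega_r(f,x;\delta')$ with $\delta'=2\bigl(h+\frac1{(n+1)r}\bigr)$. Property $(v)$ with multiplier $2$ then gives exactly $2^{2(r+1)}$ and the parameter $h+\frac1{(n+1)r}$. Your admissibility worry is moot: windows are intersected with $[-1,1]$, so $\omega_r(f,x;\delta)$ is meaningful for every $\delta>0$, and the statement itself already relies on this, since $2h$ may be as large as $4/r$.

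The gap is in Step~2, which does not produce the stated factor $2^{1/p}$. With your intervals $I_k=[\frac kn,\frac kn+\frac1{n+1}]$ for $k<n$ and $I_n=[1-\frac1{n+1},1]$, the overlap $I_{n-1}\cap I_n$ is nonempty for $n\ge2$. It leaves you with the factor $\frac{2(n+1)}{n}>2$, hence a final constant $\bigl(\frac{2(n+1)}{n}\bigr)^{1/p}2^{2(r+1)}$ rather than $2^{\frac1p+2(r+1)}$. Your suggested remedy does not work either. Every interval $[s,s+\frac1n]\subseteq[-1,1]$ has $1-s\ge\frac1n>\frac1{n+1}$, so an interval of length $\frac1n$ serving the node $1$ violates the distance bound of Step~1. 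You would only obtain $\tau_r\bigl(f,h+\frac1{nr}\bigr)_p$, which is weaker than the claim.

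The missing ingredient is a non-overlapping choice, with extra weight only at the two endpoint nodes. Take $I_k=[\frac kn-\frac1{2n},\frac kn+\frac1{2n}]\cap[-1,1]$. These tile $[-1,1]$, and every $x\in I_k$ satisfies $|x-\frac kn|\le\frac1{2n}\le\frac1{n+1}$, so Step~1 applies with the same $\delta'$. Moreover $|I_k|=\frac1n$ for $|k|<n$ and $|I_{\pm n}|=\frac1{2n}$. Writing $g=[\omega_r(f,\cdot;\delta')]^p$, this gives
\[
\sum_{k=-n}^{n}\Bigl[\omega_r\Bigl(f,\frac kn;2h\Bigr)\Bigr]^p n^{-1}\le \sum_{|k|<n}\int_{I_k}g+2\int_{I_{-n}}g+2\int_{I_n}g\le 2\,\tau_r(f,\delta')_p^p .
\]
Step~3 then yields exactly $2^{\frac1p+2(r+1)}\tau_r\bigl(f,h+\frac1{(n+1)r}\bigr)_p$.

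Alternatively, you can keep your length $\frac1{n+1}$ but make the intervals disjoint. Each $[\frac{2k-1}{2n+1},\frac{2k+1}{2n+1}]$ contains $\frac kn$ and has length $\frac2{2n+1}\ge\frac1{n+1}$. So it contains a subinterval of length $\frac1{n+1}$ through $\frac kn$, and then $\frac{n+1}{n}\le2$ gives the same factor.
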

The following property describes the behaviour of $\tau_{r}(f,\delta)_{p}$ for $\delta\rightarrow0^{+}$ under suitable assumptions on the function $f$. 
\begin{prop}[see \cite{bardaro2006approximation}, Proposition 6 and Remark 7]
\label{proposizione1.1}
Let $r \in \N^{+}$ and $f \in M([a,b])$ be fixed. If $f$ is Riemann integrable, then:
\begin{equation}
\label{cnes}
\lim_{\delta\rightarrow0^{+}} \tau_{r}(f,\delta)_{p}=0, \ \ 1 \le p < +\infty. 
\end{equation}
In the particular case of $r=1$, the vice versa holds, i.e., if $\tau_{1}(f, \delta)_{p}$ converges to zero for $\delta \rightarrow 0^{+}$, then $f \in M([a,b])$ is Riemann integrable.  
\end{prop}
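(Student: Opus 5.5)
The plan is to prove both implications of Proposition \ref{proposizione1.1} from the characterization of Riemann integrability through the oscillation. Lebesgue's criterion says that a bounded $f$ is Riemann integrable exactly when its set of discontinuity points has measure zero.

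\emph{Direct implication.} Let $f$ be Riemann integrable. Every term of $(\Delta_h^r f)(t)$ is bounded by $2^{r}\|f\|_\infty$, so we have the uniform bound $\omega_r(f,x;\delta)\le 2^r\|f\|_\infty$ for all $x$ and $\delta$. Now fix a continuity point $x$ of $f$. Writing $\sum_{k}\binom{r}{k}(-1)^{r-k}=0$, we get
\[
(\Delta_h^r f)(t)=\sum_{k=0}^{r}\binom{r}{k}(-1)^{r-k}\bigl(f(t+kh)-f(x)\bigr).
\]
All points $t+kh$ lie in $[x-r\delta/2,x+r\delta/2]$, hence
\[
\omega_r(f,x;\delta)\le 2^r \sup\{|f(y)-f(x)|: |y-x|\le r\delta/2\}.
\]
The right-hand side tends to $0$ as $\delta\to0^+$. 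By Lebesgue's criterion this holds for almost every $x$. Dominated convergence (the constant bound is integrable on $[a,b]$) then gives $\tau_r(f,\delta)_p^p=\int_a^b\omega_r(f,x;\delta)^p\,dx\to0$. Measurability of $\omega_r(f,\cdot;\delta)$ is already available from the cited Theorem 1.3. Since $\tau_r$ is monotone in $\delta$, it suffices to pass to the limit along a sequence.

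\emph{Converse for $r=1$.} Here $\omega_1(f,x;\delta)=\sup\{|f(t)-f(s)|: t,s\in[x-\delta/2,x+\delta/2]\cap[a,b]\}$. This is the oscillation of $f$ on that window. It is non-increasing as $\delta\downarrow0$, and its limit is the pointwise oscillation $o(f,x)$. Suppose $\tau_1(f,\delta)_p\to0$. By monotone convergence,
\[
\int_a^b o(f,x)^p\,dx=\lim_{\delta\to0^+}\tau_1(f,\delta)_p^p=0.
\]
Hence $o(f,x)=0$ for almost every $x$. So $f$ is continuous almost everywhere, and being bounded it is Riemann integrable by Lebesgue's criterion.

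The main obstacle is essentially bookkeeping. In the converse we must check that the monotone limit of $\omega_1(f,x;\delta)$ really is the oscillation at $x$; this uses the fact that the windows are closed and shrink to $x$. We also need measurability of $o(f,\cdot)$, which is standard because $o(f,\cdot)$ is upper semicontinuous.
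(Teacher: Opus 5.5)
Your proof is correct in both directions. The paper gives no argument of its own here: the proposition is simply quoted from Bardaro et al.\ (and, behind that, Sendov--Popov), so there is no in-paper proof to match. Your route goes through Lebesgue's criterion. At every continuity point, $\omega_r(f,x;\delta)\to 0$, because $\sum_k\binom{r}{k}(-1)^{r-k}=0$ and all nodes $t+kh$ stay in the window. Dominated convergence with the majorant $2^r\|f\|_\infty$ then finishes. For $r=1$ you identify the decreasing limit of $\omega_1(f,x;\delta)$ with the pointwise oscillation. This handles every $r$ at once, without property $(iii)$ of the $\tau$-modulus. In the converse you do not even need a limit theorem: $o(f,x)\le\omega_1(f,x;\delta)$ gives $\|o(f,\cdot)\|_p\le\tau_1(f,\delta)_p$ for every $\delta$.

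A more elementary alternative works with Darboux sums instead of Lebesgue's criterion. Take $(b-a)/\delta\in\mathbb{N}$, which is enough by monotonicity in $\delta$. The window $[x-\delta/2,x+\delta/2]\cap[a,b]$ contains the cell containing $x$ of the uniform partition $P$ of mesh $\delta/2$. It is also contained in at most three consecutive cells of the uniform partition $Q$ of mesh $\delta$. Integrating gives
\[
U(f,P)-L(f,P)\le\tau_1(f,\delta)_1\le 3\bigl(U(f,Q)-L(f,Q)\bigr).
\]
Both implications for $r=1$, $p=1$ then follow from Darboux's criterion. General $p$ follows from $\tau_1(f,\delta)_p^p\le(2\|f\|_\infty)^{p-1}\tau_1(f,\delta)_1$ together with H\"older. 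The case $r\ge2$ follows from $\tau_r(f,\delta)_p\le 2^{r-1}\tau_1(f,r\delta)_p$, obtained by iterating $(iii)$. That route is quantitative, since it sandwiches $\tau_1$ between Darboux gaps. Yours is shorter and more conceptual, but purely qualitative, resting on Lebesgue's criterion and dominated convergence.
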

Finally, a well-known density result proved by Sendov and Popov (see Theorem 2.5' of \cite{sendov1988averaged}, p. 34) establishes a connection between the $\tau$-modulus $\tau_{r}(f, \delta)_{p}$ and a particular class of functions, namely the so-called Steklov functions (see, e.g., \cite{costarelli2024convergence}). Let $W_{p}^{r}([a,b])$ denote the usual Sobolev spaces, i.e., the subspaces of $L^{p}([a,b])$ consisting of all functions $f:[a,b] \rightarrow \R$ whose $(r-1)$-th derivative is absolutely continuous and whose $r$-th derivative belongs to $L^{p}([a,b])$, with $1 \le p < +\infty$ and $r \in \N^{+}$. Then, we can state the following:
\begin{teorema}
\label{teoremaSteklov}
Let $f$ be a bounded function belonging to $L^{p}([a,b])$, for $1 \le p < +\infty$. For every $r \in \N^{+}$ and $0 < h \le \frac{b-a}{r}$, there exists a Steklov function $f_{r,h} \in L^{p}([a,b])$, defined by: 
\[f_{r,h}(x):=(-h)^{-r}\int_{0}^{h}\cdots \int_{0}^{h}\sum_{m=1}^{r}(-1)^{r-m+1}\binom{r}{m} f\left(x+\frac{m}{r}(t_{1}+...+t_{r})\right) \ dt_{1}... dt_{r},\]
where, here, $f$ has to be considered extended on $\R$, as a periodic function of period $b-a$, satisfying the following properties:
\vskip0.2cm $(i)$ $\assolutol f(x)-f_{r,h}(x) \assolutor \le \omega_{r}(f,x;2h), \ x \in [a,b]$;
\vskip0.1cm $(ii)$ $\| f-f_{r,h} \|_{p} \le \tau_{r}(f,2h)_{p}$;
\vskip0.1cm $(iii)$ $f_{r,h} \in W_{p}^{r}([a,b])$ and for its $s$-th derivative, the following inequality holds:
\[\norma f_{r,h}^{(s)} \norma_{p} \le c(r) h^{-s}\tau_{s}(f,h)_{p}, \ \ s=1, ..., r, \]
\vskip0.01cm where the constant $c(r)$ depends only on $r$.
\end{teorema}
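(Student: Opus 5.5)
This theorem is cited from Sendov and Popov (Theorem 2.5', p. 34), so I will only sketch how one would verify the three properties from the explicit formula. Throughout, extend $f$ periodically with period $b-a$.

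\textbf{Property $(i)$.} Fix $x$ and write $u=t_1+\dots+t_r\in[0,rh]$. Since $\sum_{m=1}^{r}(-1)^{r-m+1}\binom{r}{m}=(-1)^{r+1}$, the term $f(x)$ can be absorbed into the sum. This gives
\[
f(x)-\frac{(-h)^{r}}{(-h)^{r}}\,(\cdots)
=(-1)^{r}\,h^{-r}\int_{[0,h]^r}\bigl(\Delta^{r}_{u/r}f\bigr)(x)\,dt_1\cdots dt_r ,
\]
after checking the signs carefully. Hence $f(x)-f_{r,h}(x)$ is an average over $[0,h]^r$ of $r$-th differences $(\Delta^r_{u/r}f)(x)$ with step $u/r\le h$. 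Their nodes $x, x+u/r,\dots,x+u$ all lie in $[x, x+rh]\subset[x-r\cdot 2h/2,\;x+r\cdot2h/2]$. Each such difference is therefore bounded by $\omega_r(f,x;2h)$, and so is their average. This gives $(i)$.

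At the endpoints the periodic extension may leave $[a,b]$. This is handled either by the periodic convention built into the definition, or by reflecting $h\mapsto -h$ near $b$. I expect this endpoint bookkeeping to be the only delicate point in $(i)$.

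\textbf{Property $(ii)$.} Raise $(i)$ to the $p$-th power and integrate over $[a,b]$. This gives $(ii)$ directly from Definition \ref{definizione3.1}.

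\textbf{Property $(iii)$.} For each $m$, perform the substitution $s_j=\frac{m}{r}t_j$, so that each term becomes
\[
\Bigl(\tfrac{r}{m}\Bigr)^{r}\int_{[0,mh/r]^r} f(x+s_1+\dots+s_r)\,ds .
\]
This is an $r$-fold iterated Steklov mean, hence it belongs to $W^r_p$. Differentiating in $x$ $s$ times converts $s$ of the integrals into differences. The $s$-th derivative is then a linear combination, with coefficients depending only on $r$, of $(mh/r)^{-s}$ times averages of differences $\Delta^{s}_{mh/r}f$ at points within distance $O(h)$ of $x$.

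Each such difference is bounded by $\omega_s(f,y;\,c\,h)$ for nearby $y$. Averaging and taking the $L^p$ norm gives a bound by $\tau_s(f,c'h)_p$. Finally, property $(v)$ (together with monotonicity $(i)$) reduces $c'h$ to $h$ at the cost of a constant. This yields $\|f_{r,h}^{(s)}\|_p\le c(r)h^{-s}\tau_s(f,h)_p$.

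\textbf{Main obstacle.} The hardest step is the last one. Pointwise differences of $f$ are controlled by the local modulus only at the specific point $x$. Since the averaging in $(iii)$ is over shifted points, one must dominate $\omega_s(f,x+v;\delta)$ by $\omega_s(f,x;\delta+|v|)$ and then integrate, which enlarges the parameter. The enlargement is then absorbed via property $(v)$.
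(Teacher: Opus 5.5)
The paper does not prove this theorem. It quotes it from Sendov--Popov (Theorem~2.5$'$), so your sketch has to stand on its own.

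Away from the right endpoint your sketch is sound. Since $(-h)^{-r}(-1)^{r-m+1}=h^{-r}(-1)^{m+1}$, one has $f(x)-f_{r,h}(x)=(-1)^{r}h^{-r}\int_{[0,h]^r}(\Delta^r_{u/r}f)(x)\,dt_1\cdots dt_r$, and $(ii)$ follows from $(i)$. Incidentally, $\sum_{m=1}^{r}(-1)^{r-m+1}\binom{r}{m}=(-1)^{r}$, not $(-1)^{r+1}$; the sign is irrelevant once you take absolute values. For $(iii)$, set $A_\eta g(x):=\eta^{-1}\int_0^\eta g(x+v)\,dv$. Your substitution then gives $f_{r,h}=\sum_{m=1}^{r}(-1)^{m+1}\binom{r}{m}A^{r}_{mh/r}f$, hence $f^{(s)}_{r,h}=\sum_{m=1}^{r}(-1)^{m+1}\binom{r}{m}(mh/r)^{-s}A^{r-s}_{mh/r}\Delta^{s}_{mh/r}f$. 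Your ``main obstacle'' is not really one. Every difference entering $f^{(s)}_{r,h}(x)$ has all its nodes in $[x,x+rh]$, so it is bounded by $\omega_s(f,x;2rh/s)$ at the point $x$ itself, with no shifting of the centre. Property $(v)$ plus monotonicity then give $(iii)$.

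The genuine gap is the region $x\in(b-rh,b]$, which you set aside, and neither remedy you propose works there.

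With the periodic convention, the nodes wrap around to points near $a$. Their values are invisible to $\omega_r(f,x;2h)$, whose supremum is taken over $[x-rh,x+rh]\cap[a,b]$. Take $f(x)=x$ on $[0,1]$ and $r=1$:
\begin{itemize}
\item At $x=1-h/2$ one finds $f_{1,h}(x)=1/2$, so $|f(x)-f_{1,h}(x)|=(1-h)/2$, while $\omega_1(f,x;2h)\le 2h$.
\item On $(1-h,1)$, $f_{1,h}'=h^{-1}(\tilde f(\cdot+h)-f)=-(1-h)/h$, so $\|f_{1,h}'\|_p\ge(1-h)h^{1/p-1}\to+\infty$ for $p>1$, whereas $h^{-1}\tau_1(f,h)_p\le 1$.
\end{itemize}
So, with the moduli as defined in the paper, the periodic convention makes $(i)$ fail, and for $p>1$ also $(ii)$ and $(iii)$. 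Your argument proves the statement only if $\omega_r$ and $\tau_r$ are read as the moduli of the periodic extension, in which case there is no endpoint issue at all.

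Switching to $-h$ near $b$ fails for another reason. $f_{r,h}$ is then given by two formulas that do not agree at the switching point, so it generally has a jump and is not in $W^{r}_{p}([a,b])$, which destroys $(iii)$.

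What is missing is a continuation of $f$ beyond $b$ whose local moduli are controlled by those of $f$ on $[a,b]$. For $r=1$, the only case used later in the paper, the even reflection $\tilde f(b+t):=f(b-t)$ does the job. When $x+t>b$, the reflected node $2b-x-t$ lies in $[x-h,x+h]\cap[a,b]$. Hence $|f(x)-\tilde f(x+t)|\le\omega_1(f,x;2h)$ and $|f_{1,h}'(x)|\le h^{-1}\omega_1(f,x;2h)$ a.e., which yields $(i)$--$(iii)$ with $c(1)=16$ via $(v)$. For $r\ge 2$, reflection no longer controls $r$-th differences; a higher-order extension or a different endpoint construction is required.
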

\section{Error estimate and convergence in the $L^{p}$-norm for the operators $S_{n}^{\chi}$} \label{Sezione3}}
In this section, we establish estimates for the approximation error with respect to the $L^{p}$-norm, in fact in the case of functions $f:[a,b] \rightarrow \R^{+}_{0}$ that are not-necessarily continuous, for the max-product generalized sampling operators. For simplicity, let us consider the case of bounded (non-negative) functions defined on the interval $I:=[-1,1]$. Clearly, all the results proved in this section can be easily extended to any general interval $[a,b]$. \newline Throughout this section, unless otherwise stated, the kernel $\chi$ from Definition \ref{definizione1.1} will be any bounded and measurable function that satisfies properties $(\chi1)$ and $(\chi2)$. 
\newline In this setting, the definition of the operators $S_{n}^{\chi}$ in (\ref{definizioneoperatori}) can be reformulated as follows:
\begin{equation}
\label{operatori[-1,1]}
S_{n}^{\chi}(f)(x)=\frac{\displaystyle \bigvee_{k=-n}^{n}f\left(\frac{k}{n}\right)\chi(nx-k)}{\displaystyle \bigvee_{k=-n}^{n}\chi(nx-k)}, \ \ x \in I=[-1,1], \ n \in \N^{+},
\end{equation}
where $f:I\rightarrow\R_{0}^{+}$ is bounded. 
\vskip0.1cm 
\begin{remark}
\label{remark2.1}
Note that when considering the approximation of functions defined on the compact interval $[-1,1]$, condition $(\chi2)$ can be replaced with the following slightly weaker assumption:
\begin{equation*}
(\overline{\chi2}) \inf_{x \in \left[-\frac{1}{2}, \frac{1}{2} \right]} \chi(x)=:a_{\chi}>0.
\end{equation*}
Indeed, for all $x \in I$ and $n \in \N^{+}$, there exists at least a $\overline{k}=-n,...,n$ such that $\assolutol nx-\overline{k} \assolutor \le \frac{1}{2}$. Therefore, by reasoning as in the proof of Lemma \ref{disuguaglianzacruciale}, we obtain $\bigvee_{k=-n}^{n} \chi(nx-k) >0$ for each $x \in I$. On the other hand, all results proved in this section still hold if, instead of condition $(\chi2)$ (or $(\overline{\chi2})$) we directly assume that the kernel function $\chi:\R \rightarrow \R$ satisfies inequality (\ref{disuguaglianzaaconchi}), while keeping condition $(\chi1)$.  As a consequence of this observation, we can extend the theory to kernels that are not necessarily bounded on the interval $[-1, 1]$ (or $[-\frac{1}{2}, \frac{1}{2}]$) with a strictly positive lower bound, as in the case, for example, with asymmetric functions. 
\end{remark}
In order to achieve the goal of this section, we first need to prove some preliminary results. We begin with the following Lipschitz property in the $L^{p}$-setting for the operators $S_{n}^{\chi}$ in (\ref{operatori[-1,1]}) when evaluated on non-negative, measurable and bounded functions. 
\begin{lemma}
\label{Lipschitzproperty}
Let $\chi \in L^{1}(\R)$ be a given kernel and let $1 \le p < +\infty$ be fixed. Further, let us consider two bounded and non-negative functions $f, g: I \rightarrow \R^{+}_{0}$. Then, it turns out that:
\begin{equation}
\norma S_{n}^{\chi}(f)-S_{n}^{\chi}(g) \norma_{p} \le \frac{((m_{0}(\chi))^{p-1}\norma \chi \norma_{1})^{1/p}}{a_{\chi}} \norma f-g \norma_{l^{p}(\Sigma_{n})},
\end{equation}
where:
\begin{equation*}
\norma f-g \norma_{l^{p}(\Sigma_{n})}:=\left\{\sum_{k=-n}^{n} \bigg|  f\left(\frac{k}{n}\right)-g\left(\frac{k}{n}\right) \bigg|^{p}n^{-1}\right\}^{1/p}
\end{equation*}
denotes a discrete $l^{p}$-norm of the function $f-g$ on the partition \[\Sigma_{n}:=\left\{\frac{k}{n}, \ k=-n,...,n\right\}, \ \ n \in \N^{+},\]
of the interval $I$. Note that $m_{0}(\chi)<+\infty$ in view of Lemma \ref{lemma1.2}. 
\end{lemma}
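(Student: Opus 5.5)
We first reduce the estimate to a pointwise bound, and then integrate it. By Lemma \ref{lemma1.4} $(iii)$, for every $x \in I$ we have
\[
\assolutol S_{n}^{\chi}(f)(x)-S_{n}^{\chi}(g)(x)\assolutor \le S_{n}^{\chi}(\assolutol f-g\assolutor)(x).
\]
By Lemma \ref{disuguaglianzacruciale} the denominator is at least $a_{\chi}$. Writing $h:=\assolutol f-g\assolutor$, this gives
\[
\assolutol S_{n}^{\chi}(f)(x)-S_{n}^{\chi}(g)(x)\assolutor \le \frac{1}{a_{\chi}}\bigvee_{k=-n}^{n} h\left(\frac{k}{n}\right)\assolutol\chi(nx-k)\assolutor .
\]
We write $\assolutol\chi\assolutor$ here, since $\chi$ may take negative values away from $[-1,1]$; if $\chi(nx-k)<0$, the corresponding term only lowers the maximum, so the inequality is still valid.

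The key elementary step is to replace the maximum by a sum, but only after raising to the power $p$. For nonnegative numbers, $\left(\bigvee_{k} A_{k}\right)^{p}=\bigvee_{k} A_{k}^{p}\le \sum_{k} A_{k}^{p}$. Moreover, each term satisfies
\[
h\left(\frac{k}{n}\right)^{p}\assolutol\chi(nx-k)\assolutor^{p}\le (m_{0}(\chi))^{p-1}\, h\left(\frac{k}{n}\right)^{p}\assolutol\chi(nx-k)\assolutor,
\]
because $\assolutol\chi(u)\assolutor\le m_{0}(\chi)$ for every $u$ (take $\beta=0$ in $(\chi1)$; see Lemma \ref{lemma1.2}). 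Hence
\[
\assolutol S_{n}^{\chi}(f)(x)-S_{n}^{\chi}(g)(x)\assolutor^{p}\le \frac{(m_{0}(\chi))^{p-1}}{a_{\chi}^{p}}\sum_{k=-n}^{n} h\left(\frac{k}{n}\right)^{p}\assolutol\chi(nx-k)\assolutor .
\]

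Next we integrate over $I$ and exchange the finite sum with the integral. For each $k$, the substitution $u=nx-k$ gives
\[
\int_{-1}^{1}\assolutol\chi(nx-k)\assolutor\,dx\le \frac{1}{n}\int_{\R}\assolutol\chi(u)\assolutor\,du=\frac{\norma\chi\norma_{1}}{n}.
\]
This is where the hypothesis $\chi\in L^{1}(\R)$ enters, and it produces exactly the weight $n^{-1}$ that appears in $\norma f-g\norma_{l^{p}(\Sigma_{n})}$. Therefore
\[
\norma S_{n}^{\chi}(f)-S_{n}^{\chi}(g)\norma_{p}^{p}\le \frac{(m_{0}(\chi))^{p-1}\norma\chi\norma_{1}}{a_{\chi}^{p}}\sum_{k=-n}^{n} h\left(\frac{k}{n}\right)^{p}n^{-1},
\]
and taking $p$-th roots yields the claimed inequality.

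The main point needing care is measurability: $S_{n}^{\chi}(f)$ must be measurable, so that its $L^{p}$-norm makes sense. This holds because $S_{n}^{\chi}(f)$ is a finite maximum of measurable functions of $x$, divided by a measurable function bounded below by $a_{\chi}$. Beyond that, the only real idea is the trick $\bigvee\le\sum$ applied after raising to the $p$-th power, with $\assolutol\chi\assolutor^{p}$ reduced to $\assolutol\chi\assolutor$ via $m_{0}(\chi)$. This is what produces the factor $(m_{0}(\chi))^{p-1}$ in the constant. No Hölder or Jensen inequality is needed.
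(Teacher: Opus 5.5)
Your proposal is correct and follows essentially the same route as the paper. Both arguments take the pointwise bound from Lemma~\ref{lemma1.4}~$(iii)$ with the denominator bounded below by $a_{\chi}$, raise to the $p$-th power inside the maximum, reduce $\lvert\chi\rvert^{p}$ to $(m_{0}(\chi))^{p-1}\lvert\chi\rvert$, replace the maximum by a sum, and integrate using the substitution $u=nx-k$ to obtain $\|\chi\|_{1}/n$. Your explicit justifications of measurability and of passing from $\chi$ to $\lvert\chi\rvert$ are details the paper leaves implicit.
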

\begin{proof}
Let $1 \le p < +\infty$ be fixed. Applying the norm $\norma \cdot \norma_{p}$ and using property $(iii)$ of Lemma \ref{lemma1.4}, we can write what follows: 
\begin{equation*}
\begin{split}
\norma\op(f)-\op(g)\norma_{p}^{p} &= \integrale \assolutol \op(f)(x)-\op(g)(x) \assolutor^{p} \ dx \\
& \le \integrale (\op(\assolutol f-g \assolutor)(x))^{p} \ dx \\
& = \integrale \left\{\frac{\displaystyle\V \left\assolutol f\left(\frac{k}{n}\right)-g\left(\frac{k}{n}\right)\right\rvert \X(nx-k)}{\displaystyle \V \X(nx-k)}\right\}^{p}  dx
\end{split}
\end{equation*}
\begin{equation*}
\begin{split}
& \le \frac{1}{\aconchi^{p}} \integrale \left\{ \V \left\assolutol f\left(\frac{k}{n}\right)-g\left(\frac{k}{n}\right)\right\assolutor \assolutol \X(nx-k) \assolutor\right\}^{p} dx \\
& = \frac{1}{a_{\chi}^{p}} \int_{-1}^{1} \left\{\bigvee_{k=-n}^{n} \left\lvert f\left(\frac{k}{n}\right)-g\left(\frac{k}{n}\right) \right\rvert^{p} \assolutol \chi(nx-k) \assolutor^{p}\right\} \ dx.
\end{split}
\end{equation*}
Now, recalling the definition of the generalized absolute moment of order zero of $\chi$, which is finite since Lemma \ref{lemma1.2} holds, we have:
\begin{equation*}
\begin{split}
\norma \op(f) - \op(g) \norma_{p}^{p} & \le  \frac{1}{a_{\chi}^{p}} \int_{-1}^{1} \left\{ \bigvee_{k=-n}^{n} \left\lvert f\left(\frac{k}{n}\right)-g\left(\frac{k}{n}\right) \right\rvert^{p} \assolutol \chi(nx-k) \assolutor^{p}\right\} \ dx \\
&\le \frac{(m_{0}(\chi))^{p-1}}{\aconchi^{p}} \int_{-1}^{1} \left\{\bigvee_{k=-n}^{n} \left\lvert f\left(\frac{k}{n}\right)-g\left(\frac{k}{n}\right)\right\rvert^{p}\assolutol \chi(nx-k) \assolutor \right\} \ dx  \\
& \le \frac{(m_{0}(\chi))^{p-1}}{a_{\chi}^{p}} \int_{-1}^{1} \left\{\sum_{k=-n}^{n} \left\lvert f\left(\frac{k}{n}\right)-g\left(\frac{k}{n}\right)\right\rvert^{p} \assolutol \chi(nx-k) \assolutor\right\} \ dx
\end{split}
\end{equation*}
\begin{equation}
\label{eq1}
\hskip2.2cm = \frac{(m_{0}(\chi))^{p-1}}{a_{\chi}^{p}}  \left\{\sum_{k=-n}^{n} \left\lvert f\left(\frac{k}{n}\right)-g\left(\frac{k}{n}\right)\right\rvert^{p} \int_{-1}^{1} \assolutol \chi(nx-k) \assolutor \ dx \right\}.
\end{equation}
For every $k= -n, ..., n$, using the substitution $y=nx-k$, we immediately get: 
\begin{equation*}
\integrale \assolutol \X(nx-k) \assolutor \ dx \le \frac{1}{n} \int_{\R} \assolutol \X(y) \assolutor \ dy = \frac{1}{n} \norma \X \norma_{1}<+\infty,
\end{equation*}
since $\chi \in L^{1}(\R)$. Finally, using this inequality in (\ref{eq1}), we obtain:
\begin{equation*}
\norma \op(f) - \op(g) \norma_{p}^{p} \le \frac{(m_{0}(\chi))^{p-1}\norma \chi \norma_{1}}{a_{\chi}^{p}} \left\{\somma \left \assolutol f\left(\frac{k}{n}\right)-g\left(\frac{k}{n}\right) \right \assolutor^{p} n^{-1} \right\},
\end{equation*}
for every $n \in \N^{+}$. This completes the proof.
\end{proof}
In the following theorems, we investigate the approximation error (in the $L^{p}$-norm) when dealing with (non-negative) functions belonging to the Sobolev space $W_{p}^{1}(I)$, with $1 \le p < +\infty$. To this aim, we need to distinguish two cases: $i)$ $1<p<+\infty$ and $ii)$ $p=1$. The reason for this distinction is related to the fact that the tool used to estimate the quantity $\norma S_{n}^{\chi}(f)-f \norma_{p}$, for $f \in W_{p}^{1}(I)$, in the first case does not apply to  the second. More in detail, in the proof of case $i)$, a crucial role is played by the useful Hardy-Littlewood maximal inequality (see, e.g., \cite{costarelli2024asymptotic}):
\begin{equation}
\label{HLMinequality}
\| M(f;\cdot) \|_{p} \le C_{p} \| f \|_{p}, \ f \in L^{p}(I), \ 1<p<+\infty,
\end{equation}
where:
\begin{equation*}
M(f;x):=\sup_{t \in I, t \ne x} \frac{1}{\assolutol t-x \assolutor} \int_{x}^{t}\assolutol f(u) \assolutor \ du
\end{equation*}
denotes the so-called Hardy-Littlewood maximal function. It is well-known that inequality (\ref*{HLMinequality}) does not hold in the case $ii)$. Now, we can prove what follows:
\begin{teorema}
\label{StimaSobolevnoncompatto}
Let $\X$ be a kernel satisfying $(\X1)$ with $\beta \ge 1$ and let $1 < p < +\infty$ be fixed. Then for any non-negative $f \in W_{p}^{1}(I)$, it turns out that: 
\begin{equation*}
\norma \op(f) - f \norma_{p} \le \frac{m_{1}(\X)}{\aconchi} n^{-1} C_{p} \norma f' \norma_{p}, \quad n \in \N^{+}, 
\end{equation*} 
where $m_{1}(\X) < +\infty$ by Lemma \ref{lemma1.2} and $C_{p}$ is the constant arising from inequality (\ref{HLMinequality}). 
\end{teorema}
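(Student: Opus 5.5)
We bound the error pointwise and then integrate, letting the Hardy--Littlewood maximal inequality (\ref{HLMinequality}) handle the integral. Fix $x \in I$ and $n \in \N^{+}$. Since $f \ge 0$, the numerator of $\op(f)(x)$ is non-negative; we will assume, as is standard in the max-product setting, that $\X \ge 0$, and otherwise replace $\X$ by $|\X|$ in the numerator estimates. The first step is the elementary max-product inequality
\begin{equation*}
\assolutol \op(f)(x) - f(x) \assolutor \le \frac{\displaystyle \V \left\assolutol f\left(\frac{k}{n}\right) - f(x) \right\assolutor \assolutol \X(nx-k) \assolutor}{\displaystyle \V \X(nx-k)} .
\end{equation*}
It follows from $f(x)=f(x)\V\X(nx-k)/\V\X(nx-k)$ together with $\assolutol \bigvee_k A_k - \bigvee_k B_k \assolutor \le \bigvee_k \assolutol A_k - B_k \assolutor$ for non-negative entries. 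This is the argument already used in \cite{coroianu2019max}. By Lemma \ref{disuguaglianzacruciale}, the denominator is at least $\aconchi$.

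The second step introduces the derivative. Since $f \in W_{p}^{1}(I)$, $f$ is absolutely continuous. Hence, for $t = k/n \in I$ with $t \ne x$,
\begin{equation*}
\left\assolutol f\left(\frac{k}{n}\right)-f(x)\right\assolutor \le \left\assolutol \int_{x}^{k/n} \assolutol f'(u)\assolutor \, du \right\assolutor \le \left\assolutol \frac{k}{n}-x \right\assolutor M(f';x).
\end{equation*}
This uses the definition of the maximal function with $t=k/n$; the case $k/n = x$ contributes zero. Therefore each term of the max is bounded by
\begin{equation*}
M(f';x)\, n^{-1} \assolutol nx-k \assolutor \, \assolutol \X(nx-k)\assolutor \le M(f';x)\, n^{-1} m_{1}(\X).
\end{equation*}
Here $m_{1}(\X)<+\infty$ by Lemma \ref{lemma1.2}, because $\beta \ge 1$. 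Taking the maximum over $k$ gives the pointwise estimate
\begin{equation*}
\assolutol \op(f)(x)-f(x)\assolutor \le \frac{m_{1}(\X)}{\aconchi}\, n^{-1} M(f';x), \quad x \in I.
\end{equation*}

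The final step takes $L^{p}$-norms on $I$ and applies (\ref{HLMinequality}) to $f' \in L^{p}(I)$, $1<p<+\infty$, which yields exactly the claimed bound $\frac{m_{1}(\X)}{\aconchi} n^{-1} C_{p}\norma f'\norma_{p}$. Measurability of $x \mapsto M(f';x)$ is standard; the left-hand side is measurable because $\X$ and $f$ are.

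The only delicate point is the first step: justifying the max-product inequality, and the sign handling when $\X$ may take negative values away from $[-1,1]$. For non-negative kernels the inequality is immediate. For general kernels, the estimate with $\assolutol \X \assolutor$ in the numerator is what I would aim for, arguing as in Lemma 3.2 of \cite{coroianu2019max}. Everything else is routine.
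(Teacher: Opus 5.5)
Your proof is correct and follows essentially the same route as the paper. Both derive the pointwise bound $|\op(f)(x)-f(x)|\le \frac{m_{1}(\X)}{\aconchi}\,n^{-1}M(f';x)$ from the integral Taylor formula and the first-order generalized moment, and then apply the Hardy--Littlewood maximal inequality (\ref{HLMinequality}). The paper reaches your first step via the constant function $f_x$, $\op(\mathbf{1})=\mathbf{1}$ and Lemma \ref{lemma1.4} $(iii)$--$(iv)$, which amounts to the same thing. Your hedging about the sign of $\X$ is unnecessary: $\bigl|\bigvee_k A_k-\bigvee_k B_k\bigr|\le\bigvee_k|A_k-B_k|$ holds for arbitrary real entries, and $\bigvee_k f(x)\X(nx-k)=f(x)\bigvee_k\X(nx-k)$ only needs $f(x)\ge 0$.
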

\begin{proof}
In order to estimate the quantity $\norma S_{n}^{\chi}(f)-f \norma_{p}$, in what follows, for every fixed $x \in I$, let us consider the auxiliary function $f_{x}: I \rightarrow \R_{0}^{+}$, $f_{x}(t):=f(x)$, $t \in I$. Further, by $\textbf{1}:I\rightarrow \mathbb{R}$, $\textbf{1}(x):=1$, $x \in I$, we denote the identity function. Using the properties $(iii)$ and $(iv)$ of $\op$ established in Lemma \ref{lemma1.4}, together with the fact that $\op(\textbf{1})=\textbf{1}$, we have: 
\begin{equation*}
\begin{split}
\assolutol \op(f)(x)-f(x)\assolutor&=\assolutol \op(f)(x)-f(x)\op(\textbf{1})(x) \assolutor \\
& = \assolutol \op(f)(x)-\op(f_{x})(x) \assolutor \\
& \le \op(\assolutol f-f_{x} \assolutor)(x) \\
&=\frac{\displaystyle \V \left \assolutol f\left(\frac{k}{n}\right)-f(x) \right \assolutor \X(nx-k)}{\displaystyle \V \X(nx-k) },
\end{split}
\end{equation*}
for all $n \in \N^{+}$. Now, since $f \in W_{p}^{1}(I)$, with $1 < p < +\infty$, we can exploit in the previous expression the following first order Taylor formula with integral remainder (see \cite{devore1993constructive}, p. 37):
\begin{equation}
\label{Taylor}
f(u)=f(x)+\int_{x}^{u} f'(t) \ dt, \quad x, u \in I.
\end{equation}
Thus, using (\ref{Taylor}) and inequality (\ref{disuguaglianzaaconchi}), for every $n \in \N^{+}$, we can write what follows:
\begin{equation*}
\begin{split}
\integrale \assolutol \op(f)(x)-f(x) \assolutor^{p} \ dx & \le \integrale \left\{\frac{\displaystyle \V \left \assolutol \int_{x}^{k/n} f'(t) \ dt \right \assolutor \X(nx-k)}{\displaystyle \V \X(nx-k)}\right\}^{p} dx \\
& \le \frac{1}{\aconchi^{p}} \integrale \left\{\V \left \assolutol \int_{x}^{k/n} f'(t) \ dt\right \assolutor \assolutol \X(nx-k) \assolutor\right\}^{p} dx \\
&= \frac{1}{\aconchi^{p}} \integrale  \V \left \assolutol \int_{x}^{k/n} f'(t) \ dt \right \assolutor^{p} \assolutol \X(nx-k) \assolutor^{p} dx \\
&\le \frac{1}{\aconchi^{p}} \integrale  \V \left \assolutol \frac{\assolutol k/n-x \assolutor}{\assolutol k/n-x \assolutor} \int_{x}^{k/n} \assolutol f'(t) \assolutor \ dt \right \assolutor^{p} \assolutol \chi(nx-k) \assolutor^{p}  dx.
\end{split}
\end{equation*}
Now, recalling the definition of the Hardy-Littlewood maximal function, the notion of the first order generalized absolute moment of $\X$, and inequality (\ref{HLMinequality}), we finally obtain: 
\begin{equation*}
\begin{split}
\integrale \assolutol \op(f)(x)-f(x)\assolutor^{p} \ dx &\le \frac{1}{\aconchi^{p}} \integrale  \V \left \assolutol \frac{\assolutol k/n-x \assolutor}{\assolutol k/n-x \assolutor} \int_{x}^{k/n} \assolutol f'(t) \assolutor \ dt \right \assolutor^{p} \assolutol \chi(nx-k) \assolutor^{p}  dx \\
&\le \frac{1}{\aconchi^{p}} \integrale \V \left \assolutol \left\assolutol \frac{k}{n}-x\right\assolutor M(f';x) \right \assolutor^{p} \assolutol \X(nx-k) \assolutor^{p} dx \\
&= \frac{1}{\aconchi^{p}} n^{-p} \integrale \assolutol M(f';x) \assolutor^{p} \V \assolutol nx-k \assolutor^{p} \assolutol \X(nx-k) \assolutor^{p} \ dx \\ 
& \le \left(\frac{m_{1}(\chi)}{\aconchi}\right)^{p}  n^{-p} \norma M(f';\cdot) \norma_{p}^{p} \\
&\le \left(\frac{m_{1}(\chi)}{\aconchi}\right)^{p} n^{-p} C_{p}^{p}\norma f' \norma_{p}^{p}<+\infty,
\end{split}
\end{equation*}
for each $n \in \N^{+}$, where $m_{1}(\chi)<+\infty$ by Lemma \ref{lemma1.2}, since condition $(\chi1)$ is satisfied for $\beta \ge 1$. Moreover, $\norma f' \norma_{p}^{p}$ is finite, since $f$ belongs to $W_{p}^{1}(I)$. This completes the proof.
\end{proof}
In order to establish a similar estimate for case $(ii)$ $p=1$, it is necessary to require that the kernel function $\chi$ is compactly supported. Indeed, under this additional assumption, we are able to estimate the $L^{1}$-norm of the approximation error for the operators $S_{n}^{\chi}(f)$ with $f \in W_{1}^{1}(I)$ as follows.
\begin{teorema}
\label{stimaSobolevcompatto}
Let $\X$ be a kernel such that $\text{supp}(\X) \subseteq [-T, T]$, $T>0$. Then for any non-negative $f \in W_{1}^{1}(I)$, it turns out that: 
\begin{equation*}
\norma \op(f) - f \norma_{1} \le \frac{2TM_{0}(\chi)}{a_{\chi}}n^{-1}\norma f' \norma_{1}, \quad n \in \N^{+},
\end{equation*}
where $M_{0}(\chi):=\sup_{x \in \R} \sum_{k \in \Z} \assolutol \chi(x-k) \assolutor$ is finite, since $\chi$ is bounded and with compact support.
\end{teorema}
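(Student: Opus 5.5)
We follow the same start as in the proof of Theorem \ref{StimaSobolevnoncompatto}. For each fixed $x\in I$ we introduce the constant function $f_x$. Using properties $(iii)$ and $(iv)$ of Lemma \ref{lemma1.4} together with $\op(\textbf{1})=\textbf{1}$, we get
\[
\assolutol \op(f)(x)-f(x)\assolutor \le \frac{\V \left\assolutol f\left(\frac{k}{n}\right)-f(x)\right\assolutor \X(nx-k)}{\V \X(nx-k)} .
\]
By Lemma \ref{disuguaglianzacruciale} the denominator is at least $\aconchi$, so the numerator is bounded by $\V |f(k/n)-f(x)|\,|\X(nx-k)|$. We then apply the Taylor formula (\ref{Taylor}), which is valid in $W^1_1(I)$ since $f$ is absolutely continuous:
\[
\left\assolutol f\left(\tfrac{k}{n}\right)-f(x)\right\assolutor \le \left\assolutol\int_x^{k/n}|f'(t)|\,dt\right\assolutor .
\]
The Hardy--Littlewood maximal function cannot be used here, so we exploit the compact support instead. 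The factor $\X(nx-k)$ vanishes unless $|nx-k|\le T$, that is, unless $|k/n-x|\le T/n$. Hence for every relevant $k$ we have
\[
\left\assolutol\int_x^{k/n}|f'|\right\assolutor \le \int_{x-T/n}^{x+T/n}|f'(t)|\,\chi_I(t)\,dt ,
\]
and this bound no longer depends on $k$.

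Next we replace the maximum by a sum, which is legitimate because all terms are nonnegative:
\[
\V\left\assolutol f\left(\tfrac{k}{n}\right)-f(x)\right\assolutor |\X(nx-k)| \le \left(\int_{x-T/n}^{x+T/n}|f'(t)|\,dt\right)\sum_{k\in\Z}|\X(nx-k)| \le M_0(\X)\int_{x-T/n}^{x+T/n}|f'(t)|\,dt .
\]
Here $M_0(\X)$ is finite, since $\X$ is bounded and at most $2T+1$ of the translates $\X(nx-k)$ can be nonzero. It follows that
\[
\norma \op(f)-f\norma_1\le \frac{M_0(\X)}{\aconchi}\integrale\int_{x-T/n}^{x+T/n}|f'(t)|\chi_I(t)\,dt\,dx .
\]

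Finally we apply Fubini--Tonelli to the double integral. For fixed $t\in I$, the set of $x$ with $|x-t|\le T/n$ has length at most $2T/n$, so the double integral is at most $\frac{2T}{n}\norma f'\norma_1$. This gives exactly the constant $\frac{2TM_0(\X)}{\aconchi}n^{-1}$ claimed in the statement.

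The main obstacle is ensuring that the maximal-function step is avoided. The key is to obtain a bound uniform in $k$ before integrating, using the support localisation $|k/n-x|\le T/n$. We then either take the sup over $k$ (giving the constant $\norma\X\norma_\infty$) or pass to the sum $\sum_k|\X(nx-k)|\le M_0(\X)$ to match the stated constant. Only routine measurability checks remain to justify Tonelli.
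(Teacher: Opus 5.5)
Your proof is correct and follows essentially the paper's argument: the same pointwise reduction via Lemma~\ref{lemma1.4} and the Taylor formula, localisation to $|k/n-x|\le T/n$ through the support of $\chi$, and a Fubini--Tonelli step that produces the factor $2T/n$. The differences are cosmetic: you extend $f'$ by zero outside $I$, whereas the paper extends it $2$-periodically and uses $\|f'(y+\cdot)\|_{1}=\|f'\|_{1}$; and because you make the integral bound uniform in $k$ before removing the maximum, your version actually yields the slightly better constant $m_{0}(\chi)$ (which is $\le M_{0}(\chi)$) in place of $M_{0}(\chi)$.
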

\begin{proof}
Let $n \in \N^{+}$ and $f \in W_{1}^{1}(I)$ be fixed. Proceeding as in the proof of Theorem \ref{StimaSobolevnoncompatto}, we obtain:
\begin{equation*}
\begin{split}
\integrale \assolutol \op(f)(x) - f(x) \assolutor \ dx &\le \frac{1}{\aconchi} \integrale \V \left \assolutol  \int_{x}^{k/n}  f'(t) \ dt \right \assolutor \assolutol \X(nx-k) \assolutor \ dx \\
&\le \frac{1}{\aconchi} \integrale \sum_{k=-n}^{n}  \left \assolutol \int_{x}^{k/n} \assolutol f'(t) \assolutor  \ dt \right \assolutor  \assolutol \X(nx-k) \assolutor \ dx.
\end{split}
\end{equation*}
Now, we formally extend $f'$ on the whole $\R$, as a periodic function of period $2$. Thus, recalling that $\text{supp}(\X) \subseteq [-T, T]$, and using the change of variable $y=t-x$, we get:
\begin{equation*}
\begin{split}
\integrale \assolutol \op(f)(x) -f(x) \assolutor \ dx &\le \frac{1}{a_{\chi}} \integrale \sum_{\overset{k=-n}{\assolutol nx-k \assolutor \le T}}^{n} \left \assolutol \int_{0}^{k/n-x} \assolutol f'(y+x) \assolutor \ dy  \right \assolutor \assolutol \X(nx-k) \assolutor \ dx \\ 
& \le  \frac{1}{\aconchi} \integrale \sum_{\overset{k=-n}{\assolutol nx-k \assolutor \le T}}^{n} \left[ \int_{\assolutol y \assolutor \le \assolutol k/n - x \assolutor} \assolutol f'(y+x) \assolutor \ dy \right] \assolutol \X(nx-k) \assolutor \ dx \\ 
& \le \frac{1}{\aconchi} \integrale \sum_{\overset{k=-n}{\assolutol nx-k \assolutor \le T}}^{n} \left[\int_{\assolutol y \assolutor \le T/n} \assolutol f'(y+x) \assolutor \ dy  \right] \assolutol \X(nx-k) \assolutor \ dx. \\ 
\end{split}
\end{equation*} 
We can interchange the integrals in the above computations by the Fubini-Tonelli theorem, obtaining:
\begin{equation*}
\begin{split}
\integrale \assolutol \op(f)(x)-f(x) \assolutor \ dx &\le \frac{1}{\aconchi} \integrale \sum_{\overset{k=-n}{\assolutol nx-k \assolutor \le T}}^{n} \left[\int_{\assolutol y \assolutor \le T/n} \assolutol f'(y+x) \assolutor \ dy  \right] \assolutol \X(nx-k) \assolutor \ dx \\
& = \frac{1}{\aconchi} \int_{\assolutol y \assolutor \le T/n} dy \integrale \assolutol f'(y+x) \assolutor \sum_{\overset{k=-n}{\assolutol nx-k \assolutor \le T}}^{n} \assolutol \X(nx-k) \assolutor \ dx \\
& \le \frac{M_{0}(\X)}{\aconchi} \int_{\assolutol y \assolutor \le T/n} \norma f'(y+\cdot) \norma_{1} \ dy,
\end{split}
\end{equation*}
where $M_{0}(\chi)=\sup_{x \in \R} \sum_{k \in \Z} \assolutol \X(x-k) \assolutor < +\infty$, since $\X$ is bounded and compactly supported. Finally, observing that: 
\begin{equation*}
\norma f'(y+\cdot) \norma_{1} = \norma f' \norma_{1},
\end{equation*}
for every $y \in [-T/n, T/n]$, we get the thesis.
\end{proof}
We are now ready to prove the main result of this section: a quantitative Jackson-type estimate, in terms of the first-order $\tau$-modulus, for the approximation (in the $L^{p}$-norm) of not-necessarily continuous (non-negative) functions by the max-product generalized sampling operators defined in (\ref{operatori[-1,1]}).
\begin{teorema}
\label{teorema2.3}
Let $\X \in L^{1}(\R)$ be a given kernel satisfying $(\chi1)$ with $\beta \ge 1$ and let $1 < p < +\infty$ be fixed. Then for any non-negative and bounded function $f \in L^{p}(I)$, it turns out that:
\begin{equation*}
\norma \op(f) - f \norma_{p} \le K \tau_{1}\left(f, \frac{1}{n}\right)_{p},
\end{equation*}  
for every $n \in \N^{+}$, where:
\begin{equation*}
K:= \frac{2^{(1/p)+8}((m_{0}(\X))^{p-1}\norma \X \norma_{1})^{1/p}}{\aconchi}+C_{p}c(1)\frac{m_{1}(\chi)}{\aconchi}+16,
\end{equation*}
$C_{p}$ is the constant arising from inequality (\ref{HLMinequality}) and $c(1)$ is given in $(iii)$ of Theorem \ref{teoremaSteklov} with $r=1$.
(Note that, by Lemma \ref{lemma1.2}, $m_{0}(\chi)$ and $m_{1}(\chi)$ are both finite). 
\end{teorema}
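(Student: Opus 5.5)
The plan is a standard density argument through the Steklov function of Theorem \ref{teoremaSteklov} with $r=1$. Fix $n$, set $h=\frac{1}{n}$ (admissible, since $0<h\le 2$), and write $g:=f_{1,h}$. For $r=1$ the Steklov function is the average $g(x)=\frac{1}{h}\int_0^h f(x+t)\,dt$ of the periodically extended $f$. Hence $g$ is bounded and non-negative, because $f$ is, and $g\in W^1_p(I)$. Both Lemma \ref{Lipschitzproperty} and Theorem \ref{StimaSobolevnoncompatto} therefore apply to $g$. We split
\[
\norma \op(f)-f\norma_p \le \norma \op(f)-\op(g)\norma_p + \norma \op(g)-g\norma_p + \norma g-f\norma_p
\]
and bound the three terms separately.

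\textbf{Third term.} By $(ii)$ of Theorem \ref{teoremaSteklov} and monotonicity/property $(v)$ of the $\tau$-modulus, $\norma g-f\norma_p\le\tau_1(f,2/n)_p\le 4^2\tau_1(f,1/n)_p=16\,\tau_1(f,1/n)_p$. This accounts for the constant $16$ in $K$.

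\textbf{Second term.} Theorem \ref{StimaSobolevnoncompatto} together with $(iii)$ of Theorem \ref{teoremaSteklov} ($s=1$) gives
\[
\norma \op(g)-g\norma_p\le \frac{m_1(\chi)}{a_\chi}n^{-1}C_p\norma g'\norma_p\le \frac{m_1(\chi)}{a_\chi}C_p c(1)\, n^{-1}h^{-1}\tau_1(f,h)_p .
\]
Since $h=1/n$, this is $C_pc(1)\frac{m_1(\chi)}{a_\chi}\tau_1(f,1/n)_p$, which is the middle summand of $K$.

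\textbf{First term.} Lemma \ref{Lipschitzproperty} bounds it by $\frac{((m_0(\chi))^{p-1}\norma\chi\norma_1)^{1/p}}{a_\chi}\norma f-g\norma_{l^p(\Sigma_n)}$. By $(i)$ of Theorem \ref{teoremaSteklov}, $|f(k/n)-g(k/n)|\le\omega_1(f,k/n;2h)$ pointwise at each node. This pointwise control is exactly why the Steklov approximant is used rather than an arbitrary $L^p$-close smooth function. Lemma \ref{Lemma1.5} with $r=1$ then yields
\[
\norma f-g\norma_{l^p(\Sigma_n)}\le 2^{1/p+4}\,\tau_1\Big(f,\tfrac1n+\tfrac1{n+1}\Big)_p\le 2^{1/p+4}\,\tau_1\big(f,\tfrac2n\big)_p\le 2^{1/p+4}\cdot 16\,\tau_1\big(f,\tfrac1n\big)_p .
\]
The right-hand side equals $2^{1/p+8}\tau_1(f,1/n)_p$, matching the first summand of $K$. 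Summing the three bounds gives the stated estimate.

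The main obstacle is this first term. One has to check that Lemma \ref{Lemma1.5} applies with $2h$ as written: the lemma uses $\omega_r(f,k/n,2h)$ with $h\le 2/r$, so $h=1/n$ is allowed. One must also make sure that the localization interval in $(i)$ of Theorem \ref{teoremaSteklov}, with the periodic extension of $f$, is compatible with the local modulus on $I$. If the periodic extension causes trouble near the endpoints, I would first try to argue directly from $g(x)-f(x)=\frac1h\int_0^h(f(x+t)-f(x))dt$ for $x+h\le 1$, and treat the boundary layer separately.
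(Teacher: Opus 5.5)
Your proposal is correct and is essentially the paper's own proof. It uses the same three-term splitting through the Steklov function $f_{1,h}$, the same appeals to Lemma~\ref{Lipschitzproperty}, Theorem~\ref{StimaSobolevnoncompatto}, Lemma~\ref{Lemma1.5} and property $(v)$, and the same constants; the only cosmetic difference is that the paper keeps $h$ free and sets $h=1/n$ at the end. The paper simply invokes $(i)$ of Theorem~\ref{teoremaSteklov} as stated and never addresses the endpoint question you raise, but your caution is justified: with a literal $2$-periodic extension, $(i)$ fails near $x=1$ (e.g.\ for $f=\mathbf{1}_{(0,1]}$), whereas extending $f$ to the right by the constant value $f(1)$ restores the needed properties for $r=1$.
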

\begin{proof}
Let $n \in \N^{+}$ and $0 < h \le 2$ be fixed. Exploiting Theorem \ref{teoremaSteklov} with $r=1$, we can write what follows:
\begin{equation*}
\norma \op(f) - f \norma_{p} \le \norma \op(f) - \op(f_{1,h}) \norma_{p} + \norma \op(f_{1,h})-f_{1,h} \norma_{p} + \norma f_{1,h}-f \norma_{p},
\end{equation*}
where $f_{1,h}$ is the Steklov-type function corresponding to $f$. Now, by $(ii)$ of Theorem \ref{teoremaSteklov}, we immediately get:
\begin{equation*}
\norma f_{1,h}-f \norma_{p} \le \tau_{1}(f, 2h)_{p}.
\end{equation*}
Further, we can apply Theorem \ref{StimaSobolevnoncompatto} to $f_{1,h}$, since it is non-negative and belongs to $W_{p}^{1}(I)$, according to Theorem \ref{teoremaSteklov}. Thus, using again Theorem \ref{teoremaSteklov} $(iii)$ (with $r=1$), we obtain:
\begin{equation*}
\norma \op(f_{1,h}) -f_{1,h} \norma_{p} \le  \frac{m_{1}(\X)}{\aconchi} n^{-1} C_{p} \norma f_{1,h}' \norma_{p} \le \bar{K} n^{-1}h^{-1}\tau_{1}(f,h)_{p}, 
\end{equation*}
where: 
\begin{equation*}
\bar{K}:=\frac{m_{1}(\chi)}{\aconchi}C_{p}c(1),
\end{equation*}
for suitable positive constants $C_{p}$ and $c(1)$, and $m_{1}(\X)<+\infty$ by Lemma \ref{lemma1.2}.
\newline Now, to estimate the quantity $\norma \op(f) - \op(f_{1,h}) \norma_{p}$, we use Lemma \ref{Lipschitzproperty}, $(i)$ of (again) Theorem \ref{teoremaSteklov} (with $r=1$), Lemma \ref{Lemma1.5}, and the property $(i)$ of the first-order $\tau$-modulus, respectively, as follows:
\begin{equation*}
\begin{split}
\norma \op(f) - \op(f_{1,h}) \norma_{p} &\le \frac{((m_{0}(\X))^{p-1}\norma \X \norma_{1})^{1/p}}{\aconchi} \norma f - f_{1,h} \norma_{l^{p}({\Sigma_{n}})} \\
& =  \frac{((m_{0}(\X))^{p-1}\norma \X \norma_{1})^{1/p}}{\aconchi}  \left\{\somma \left \assolutol f\left(\frac{k}{n}\right) -f_{1,h}\left(\frac{k}{n}\right)\right \assolutor^{p} n^{-1}\right\}^{1/p} \\
& \le \frac{((m_{0}(\X))^{p-1}\norma \X \norma_{1})^{1/p}}{\aconchi} \left\{\somma \left[\omega_{1}\left(f,\frac{k}{n}, 2h\right)\right]^{p}n^{-1}\right\}^{1/p} \\
& \le \frac{2^{(1/p)+4}((m_{0}(\X))^{p-1}\norma \X \norma_{1})^{1/p}}{\aconchi}  \tau_{1}\left(f, h+\frac{1}{n+1}\right)_{p} \\
& \le  \frac{2^{(1/p)+4}((m_{0}(\X))^{p-1}\norma \X \norma_{1})^{1/p}}{\aconchi}  \tau_{1}\left(f, h+\frac{1}{n}\right)_{p}.
\end{split}
\end{equation*}
Finally, setting $h=1/n \le 1$, rearranging all the above estimates, by the property $(v)$ of $\tau_{1}(f, \cdot)_{p}$, we immediately get:
\begin{equation*}
\begin{split}
\norma \op(f) - f \norma_{p} & \le \left(\frac{2^{(1/p)+4}((m_{0}(\X))^{p-1}\norma \X \norma_{1})^{1/p}}{\aconchi} +1\right)\tau_{1}\left(f, \frac{2}{n}\right)_{p} + \bar{K}\tau_{1}\left(f, \frac{1}{n}\right)_{p} \\
& \le \left(\frac{2^{(1/p)+8}((m_{0}(\X))^{p-1}\norma \X \norma_{1})^{1/p}}{\aconchi} +16+\bar{K}\right) \tau_{1}\left(f, \frac{1}{n}\right)_{p}.
\end{split}
\end{equation*} 
This completes the proof.
\end{proof}
Note that the quantitative estimate established in Theorem \ref{StimaSobolevnoncompatto} is shown to be crucial in the proof of the previous theorem, which therefore does not hold in the case $(ii)$ $p=1$. However, in view of Theorem \ref{stimaSobolevcompatto}, we can obtain an analogous result in this case if we consider the operators $S_{n}^{\chi}$ in (\ref{operatori[-1,1]}) based upon kernels with compact support. Namely, we can state the following:
\begin{teorema}
\label{teorema2.4}
Let $\chi$ be a kernel such that $supp(\chi) \subseteq [-T, T]$, $T>0$. Then for any non-negative and bounded function $f \in L^{1}(I)$, we have:
\begin{equation*}
\norma \op(f) - f \norma_{1} \le K \tau_{1}\left(f, \frac{1}{n}\right)_{1},
\end{equation*}
for every $n \in \N^{+}$, where:
\begin{equation*}
K:=\frac{2^{9}\norma \chi \norma_{1}}{\aconchi}+c(1)\frac{2TM_{0}(\X)}{\aconchi}+16,
\end{equation*}
and $c(1)$ is (again) the constant arising from Theorem \ref{teoremaSteklov} $(iii)$ with $r=1$. 
\end{teorema}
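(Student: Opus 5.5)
The plan mirrors the proof of Theorem \ref{teorema2.3}, with Theorem \ref{stimaSobolevcompatto} replacing Theorem \ref{StimaSobolevnoncompatto}. Fix $n \in \N^{+}$ and $0<h\le 2$, and let $f_{1,h}$ be the Steklov function of Theorem \ref{teoremaSteklov} with $r=1$, $p=1$. Since $f \ge 0$ is bounded and $f_{1,h}$ is an average of translates of (the periodic extension of) $f$, $f_{1,h}$ is non-negative, bounded and lies in $W_{1}^{1}(I)$. We split
\begin{equation*}
\norma \op(f)-f\norma_{1}\le \norma \op(f)-\op(f_{1,h})\norma_{1}+\norma \op(f_{1,h})-f_{1,h}\norma_{1}+\norma f_{1,h}-f\norma_{1}.
\end{equation*}
By Theorem \ref{teoremaSteklov} $(ii)$, the last term is at most $\tau_{1}(f,2h)_{1}$.

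For the middle term, Theorem \ref{stimaSobolevcompatto} applied to $f_{1,h}$, followed by Theorem \ref{teoremaSteklov} $(iii)$ with $s=1$, gives the bound $\frac{2TM_{0}(\chi)}{\aconchi}n^{-1}c(1)h^{-1}\tau_{1}(f,h)_{1}$. The kernel satisfies the hypotheses needed elsewhere: it is bounded and compactly supported, so $\chi \in L^{1}(\R)$ and $(\chi1)$ holds for every $\beta$, and $m_{0}(\chi)$ is finite.

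For the first term, Lemma \ref{Lipschitzproperty} with $p=1$ has constant $\norma\chi\norma_{1}/\aconchi$, since the factor $(m_{0}(\chi))^{p-1}$ equals $1$. So the term is bounded by $\frac{\norma\chi\norma_{1}}{\aconchi}\norma f-f_{1,h}\norma_{l^{1}(\Sigma_{n})}$. Theorem \ref{teoremaSteklov} $(i)$ bounds this discrete norm by $\{\somma \omega_{1}(f,k/n;2h)\,n^{-1}\}$. Lemma \ref{Lemma1.5} with $r=1$, $p=1$ then gives the bound $2^{1+4}\tau_{1}(f,h+\frac{1}{n+1})_{1}\le 2^{5}\tau_{1}(f,h+\frac1n)_{1}$, using monotonicity $(i)$ of the $\tau$-modulus.

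Finally choose $h=1/n$. The middle term becomes $c(1)\frac{2TM_{0}(\chi)}{\aconchi}\tau_{1}(f,1/n)_{1}$. The other two terms involve $\tau_{1}(f,2/n)_{1}$, and property $(v)$ with $n=2$, $r=1$ gives $\tau_{1}(f,2/n)_{1}\le 16\,\tau_{1}(f,1/n)_{1}$. This produces the coefficients $2^{5}\cdot 16=2^{9}$ on $\norma\chi\norma_{1}/\aconchi$ and $16$ from the Steklov term, which is exactly the stated $K$.

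The only delicate point is checking that $f_{1,h}$ is admissible in Theorem \ref{stimaSobolevcompatto}, i.e. non-negative and in $W_{1}^{1}(I)$. The first property is immediate from the averaging formula and the second is Theorem \ref{teoremaSteklov} $(iii)$. Everything else is bookkeeping of constants.
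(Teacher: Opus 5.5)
Your proposal is correct and is essentially the paper's proof: the paper only says that the argument of Theorem \ref{teorema2.3} carries over with Theorem \ref{stimaSobolevcompatto} in place of Theorem \ref{StimaSobolevnoncompatto}, and that is exactly the three-term Steklov splitting you carry out with $p=1$ and $h=1/n$. Your bookkeeping reproduces the stated constant $K$: $2^{5}$ from Lemma \ref{Lemma1.5} with $p=r=1$, times $16$ from property $(v)$, plus $16$ from the Steklov term and $c(1)\,2TM_{0}(\chi)/a_{\chi}$ from the Sobolev term.
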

The proof of Theorem \ref{teorema2.4} is analogous to that of Theorem \ref{teorema2.3}, except that the inequality proved in Theorem \ref{stimaSobolevcompatto} is used instead of the one from Theorem \ref{StimaSobolevnoncompatto}.\newline  As a consequence of the quantitative estimates established in Theorems \ref{teorema2.3} and \ref{teorema2.4}, combined with the convergence property of $\tau_{1}(f, \cdot)_{p}$ recalled in Proposition \ref{proposizione1.1} (with $r=1$), the $L^{p}$-convergence for the max-product generalized sampling operators in (\ref{operatori[-1,1]}) to any non-negative function $f$ immediately follows, provided that $f$ is bounded, measurable, and Riemann integrable on the interval $I$.
\begin{teorema}
\label{risultatoconvergenza}
Under the same assumptions of Theorem \ref{teorema2.3}, for any non-negative Riemann integrable function $f \in M(I)$, we have:
\begin{equation*}
\lim_{n \rightarrow +\infty} \norma S_{n}^{\chi}(f) - f \norma_{p} = 0,  \ \ 1 < p < +\infty. 
\end{equation*} 
Further, under the same assumptions of Theorem \ref{teorema2.4}, for every non-negative Riemann integrable function $f \in M(I)$, we get:
\begin{equation*}
\lim_{n \rightarrow +\infty} \norma S_{n}^{\chi}(f)-f \norma_{1} =0.
\end{equation*} 
\end{teorema}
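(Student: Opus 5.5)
The plan is to combine the quantitative estimates of Theorems \ref{teorema2.3} and \ref{teorema2.4} with the characterization of Riemann integrability through the $\tau$-modulus given in Proposition \ref{proposizione1.1}.

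We start from a non-negative Riemann integrable $f \in M(I)$. Such an $f$ is bounded and measurable on the bounded interval $I$, so $f \in L^{p}(I)$ for every $1 \le p < +\infty$. In the non-equivalence-class sense adopted in this paper, $f$ is therefore an admissible input for both theorems.

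\emph{Case $1<p<+\infty$.} Under the hypotheses of Theorem \ref{teorema2.3}, namely $\chi \in L^{1}(\R)$ satisfying $(\chi1)$ with $\beta \ge 1$, we have
\begin{equation*}
0 \le \norma \op(f)-f \norma_{p} \le K \tau_{1}\left(f,\frac{1}{n}\right)_{p}, \qquad n \in \N^{+}.
\end{equation*}
Here $K$ depends only on $\chi$ and $p$, not on $n$. By Proposition \ref{proposizione1.1} with $r=1$, Riemann integrability gives $\tau_{1}(f,\delta)_{p}\to 0$ as $\delta\to0^{+}$. Taking $\delta=1/n\to 0$, the squeeze argument gives the first limit.

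\emph{Case $p=1$.} Under the assumption $\text{supp}(\chi)\subseteq[-T,T]$ of Theorem \ref{teorema2.4}, we get
\begin{equation*}
\norma \op(f)-f\norma_{1}\le K\tau_{1}\left(f,\frac{1}{n}\right)_{1},
\end{equation*}
again with $K$ independent of $n$. Proposition \ref{proposizione1.1} with $p=1$ then yields the second limit in the same way.

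The only point needing care is to check that the constants $K$ are finite. For Theorem \ref{teorema2.3} this holds because $m_0(\chi)$ and $m_1(\chi)$ are finite by Lemma \ref{lemma1.2}, and $\|\chi\|_1<\infty$ by assumption. For Theorem \ref{teorema2.4}, a bounded compactly supported kernel has $\|\chi\|_1<\infty$ and $M_0(\chi)<\infty$. Once this is confirmed, the argument is a direct corollary, and no step is a real obstacle.
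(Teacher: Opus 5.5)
Your proposal is correct and follows the paper's route. The paper also obtains this theorem as an immediate corollary: it combines the estimates $\|S_{n}^{\chi}(f)-f\|_{p}\le K\,\tau_{1}(f,1/n)_{p}$ of Theorems \ref{teorema2.3} and \ref{teorema2.4}, whose constants $K$ do not depend on $n$, with Proposition \ref{proposizione1.1} for $r=1$. Your explicit check that these constants are finite is a small welcome addition.
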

\begin{remark}
$(a)$ Note that all the results proved in this section still hold if we consider the approximation of not-necessarily non-negative functions $f:I \rightarrow \R$, as long as they are bounded from below. Indeed, to prove this, it is sufficient to take into account the non-linear operators $(L_{n}^{\chi}(f))_{n \in \N^{+}}$ defined by $L_{n}^{\chi}(f)(x):=S_{n}^{\chi}(f-c)(x)+c$, $x \in I$, where $c:=\inf_{x \in I}f(x)$. 
\newline $(b)$ The quantitative estimates established in Theorems \ref{teorema2.3} and \ref{teorema2.4} (and, hence, also the convergence result in Theorem \ref{risultatoconvergenza}) for the max-product generalized sampling operators can be applied to several kernels from the wide family of functions known in the literature (see, for instance, \cite{coroianu2019max,coroianu2021approximation}), which satisfy the (weak) assumptions $(\chi1)$ and $(\chi2)$.  
\end{remark}
\section{Shape-preserving properties for the max-product generalized sampling operators \label{Sezione4}}
In this section, we present some shape-preserving properties of the max-product generalized sampling operators defined in Definition \ref{definizione1.1}. First, we prove that, under suitable additional assumptions on the kernel $\chi$, the non-linear operators $S_{n}^{\chi}(f)$ partially preserve monotonicity on the interval $[0,1]$, whenever $f:[0,1] \rightarrow \R_{0}^{+}$ is a bounded function that is either non-increasing or non-decreasing on the entire interval. The approach we follow is based on the idea originally developed by Coroianu and Gal in \cite{coroianu2011approximation}, where the authors established a similar result only for the sinc and Fejér kernels. \newline In order to achieve the same result for a wider class of max-product generalized sampling operators, we require that the kernel $\chi$ from Definition \ref{definizione1.1} satisfies the following additional assumptions: 
\vskip0.2cm $(\chi3)$ $\chi \in C^{1}(\R)$, $\chi(x) \ge 0$ for every $x \in \R$, and moreover $\lim_{\assolutol x \assolutor \rightarrow +\infty} \chi(x)=0$;
\vskip0.05cm $(\chi4)$ $\chi(x)$ is an even function;
\vskip0.05cm $(\chi5)$ $\chi(x)$ is non-decreasing for $x <0$ and non-increasing for $x \ge 0$.
\begin{remark}
\label{Remark3.1}
Note that conditions $(\chi4)$ and $(\chi5)$ imply that $\chi(1)=a_{\chi}>0$, where $a_{\chi}$ is the constant of condition $(\chi2)$.
\end{remark}
Under the assumptions $(\chi1)-(\chi5)$, we study in this section the shape-preserving properties of the following family of max-product operators: 
\begin{equation}
\label{operatori[0,1]}
S_{n}^{\chi}(f)(x)=\frac{\displaystyle \bigvee_{k=0}^{n}f\left(\frac{k}{n}\right)\chi(nx-k)}{\displaystyle \bigvee_{k=0}^{n}\chi(nx-k)} , \ \ x \in [0,1], \ n \in \N^{+},
\end{equation}
where $f:[0,1] \rightarrow \R_{0}^{+}$ is bounded. 

\begin{remark}
$(a)$ Since, in this setting, Lemma \ref{disuguaglianzacruciale} holds with $a_{\chi}=\chi(1)>0$, we know that $S_{n}^{\chi}(f)(x)$  is a well-defined function for all $x \in [0,1]$ and $n \in \N^{+}$. Obviously, condition $(\chi3)$ implies that both the numerator and denominator in the right-hand side of (\ref{operatori[0,1]}) are continuous functions, as they are the maximum of a finite number of continuous functions. Thus, we immediately obtain the continuity of $S_{n}^{\chi}(f)(x)$ for all $x \in [0,1]$. 
\newline $(b)$ As noted in Remark \ref{remark2.1} too, if we replace condition $(\chi2)$ with the slightly weaker assumption $(\overline{\chi2})$ while keeping all other conditions, the main results of this section (see Theorem \ref{TeoremaSPP}) still hold. Furthermore, as observed in Remark \ref{Remark3.1}, in this case we would have $\chi(\frac{1}{2})=a_{\chi}>0$. 
\end{remark}
In order to establish the main results of this section, we need the following lemma.
\begin{lemma}
\label{LemmaSPP}
Let $\chi$ be a given kernel satisfying $(\chi1)-(\chi5)$. For any $j \in \{0, 1 ,..., n-1\}$, $n \in \N^{+}$, we have: 
\begin{equation*}
\begin{split}
&\bigvee_{k=0}^{n} \chi(nx-k) = \chi(nx-j), \ for \ every \ x \in \left[\frac{j}{n}, \frac{j}{n}+\frac{1}{2n}\right], \\ 
&\bigvee_{k=0}^{n} \chi(nx-k)=\chi(nx-(j+1)), \ for \ every \ x \in \left[\frac{j}{n}+\frac{1}{2n}, \frac{j}{n}+\frac{1}{n}\right].
\end{split}
\end{equation*}
\end{lemma}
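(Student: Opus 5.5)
The plan is to reduce everything to a comparison of the distances $\lvert nx-k\rvert$, using that by $(\chi4)$ and $(\chi5)$ the kernel is a non-increasing function of $\lvert u\rvert$. Indeed, $(\chi4)$ gives $\chi(u)=\chi(\lvert u\rvert)$. By $(\chi5)$, $\chi$ is non-increasing on $[0,+\infty)$. Hence
\[
\lvert u\rvert\le\lvert v\rvert \quad\Longrightarrow\quad \chi(u)\ge\chi(v), \qquad u,v\in\R .
\]
Therefore, for fixed $x$, the maximum $\bigvee_{k=0}^{n}\chi(nx-k)$ is attained at any index $k\in\{0,\dots,n\}$ minimizing $\lvert nx-k\rvert$, i.e.\ at an integer node of $\{0,\dots,n\}$ nearest to $nx$.

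Fix $j\in\{0,\dots,n-1\}$ and set $u:=nx-j$.

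\emph{First interval.} For $x\in[\frac{j}{n},\frac{j}{n}+\frac{1}{2n}]$ we have $u\in[0,\frac12]$. We show $\lvert u\rvert\le\lvert nx-k\rvert=\lvert u-(k-j)\rvert$ for every $k\in\{0,\dots,n\}$.
\begin{itemize}
\item If $k\ge j+1$, then $k-j-u\ge 1-\frac12=\frac12\ge u$.
\item If $k\le j-1$, then $u-(k-j)\ge u+1\ge u$.
\item If $k=j$, equality holds trivially.
\end{itemize}
Hence $\chi(nx-j)\ge\chi(nx-k)$ for all $k$. Since $j$ itself belongs to the index range, the maximum equals $\chi(nx-j)$.

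\emph{Second interval.} For $x\in[\frac{j}{n}+\frac{1}{2n},\frac{j+1}{n}]$, set $v:=nx-(j+1)\in[-\frac12,0]$. The argument is symmetric.
\begin{itemize}
\item If $k\le j$, then $\lvert nx-k\rvert=(j+1-k)+v\ge 1-\frac12\ge\lvert v\rvert$.
\item If $k\ge j+2$, then $\lvert nx-k\rvert\ge 1+\lvert v\rvert$.
\end{itemize}
Since $j+1\le n$ lies in the index range, the maximum is $\chi(nx-(j+1))$.

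At the common endpoint $x=\frac{j}{n}+\frac{1}{2n}$ the two formulas agree, because $\chi(\frac12)=\chi(-\frac12)$ by evenness, so there is no conflict.

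There is no serious obstacle. The only points needing care are these:
\begin{itemize}
\item Converting the monotonicity assumption $(\chi5)$, stated separately on $x<0$ and $x\ge0$, together with evenness into the single statement ``$\chi$ is non-increasing in $\lvert u\rvert$''. Evenness reduces negative arguments to positive ones, so the one-sided monotonicity on $[0,+\infty)$ suffices.
\item Checking that the minimizing index lies in $\{0,\dots,n\}$, which holds because $0\le j<j+1\le n$.
\end{itemize}
Assumptions $(\chi1)$--$(\chi3)$ are not needed beyond guaranteeing that the operators are well defined.
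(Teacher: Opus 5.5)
Your proof is correct and takes essentially the same approach as the paper. Both use $(\chi4)$ and $(\chi5)$ to make $\chi$ non-increasing in $\lvert u\rvert$ and then compare distances. The paper goes through the intermediate bound $\lvert nx-j\rvert\le\frac12\le\lvert nx-k\rvert$ for $k\ne j$ (and likewise with $j+1$), whereas you compare $\lvert nx-j\rvert\le\lvert nx-k\rvert$ directly.
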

\begin{proof}
Let $j \in \{0, 1, ..., n-1\}$, $n \in \N^{+}$, and $x \in \left[\frac{j}{n}, \frac{j}{n}+\frac{1}{2n}\right]$ be fixed. Then we have $0 \le nx-j \le \frac{1}{2}$ which implies
\begin{equation*}
\chi(nx-j) \ge \chi\left(\frac{1}{2}\right) \ge \chi(1)>0,
\end{equation*} 
since $\chi(x)$ is non-increasing for $x \ge 0$ by $(\chi5)$ and $(\chi2)$ holds with $a_{\chi}=\chi(1)$. If $k \in \{0, 1,..., n\}$, with $k \ne j$, we can write what follows:
\begin{equation*}
\assolutol nx-k \assolutor =n \biggl\assolutol x -\frac{k}{n} \biggr\assolutor \ge n \cdot \frac{1}{2n} =\frac{1}{2}.
\end{equation*}
Now, using assumptions $(\chi4)$ and $(\chi5)$ (again), we get:
\begin{equation*}
\chi(nx-k)=\chi(\assolutol nx-k \assolutor) \le \chi\left(\frac{1}{2}\right),
\end{equation*} 
and the first part of the thesis immediately follows. 
\newline  Let now $x \in \left[\frac{j}{n}+\frac{1}{2n}, \frac{j}{n}+\frac{1}{n}\right]$ be fixed. This implies $-\frac{1}{2} \le nx-(j+1) \le 0$, thus, by using again the fact that $\chi(x)$ is even and non-decreasing for $x <0$, we have: 
\begin{equation*}
\chi(nx-(j+1)) \ge \chi\left(\frac{1}{2}\right) \ge \chi(1)>0.
\end{equation*} 
Observing that for every $k \in \{0,1,...,n\}$, with $k \ne j+1$,
\begin{equation*}
\assolutol nx-k \assolutor= n \biggl\assolutol x-\frac{k}{n} \biggr\assolutor \ge \frac{1}{2}, 
\end{equation*} 
we finally obtain:
\begin{equation*}
\chi(nx-k)=\chi(\assolutol nx-k \assolutor) \le \chi\left(\frac{1}{2}\right). 
\end{equation*}
This completes the proof.
\end{proof}
Now, we are able to prove that the max-product generalized sampling operator $S_{n}^{\chi}(f)$ in (\ref{operatori[0,1]}) partially preserves the monotonicity of $f$ on $[0,1]$ for all $n \in \N^{+}$. 
\begin{teorema}
\label{TeoremaSPP}
Under the assumptions on $\chi$ of Lemma \ref{LemmaSPP}, we have:
\newline $(i)$ If $f: [0,1] \rightarrow \R^{+}_{0}$ is  bounded and non-decreasing on $[0,1]$ then for every $n \in \N^{+}$, the max-product generalized sampling operator $S_{n}^{\chi}(f)$ is non-decreasing on each sub-interval $[\frac{j}{n},\frac{j}{n}+\frac{1}{2n}]$, $j \in \{0,1,...,n-1\}$. \\
$(ii)$ If $f:[0,1] \rightarrow \R^{+}_{0}$ is bounded and non-increasing on $[0,1]$ then for every $n \in \N^{+}$, the max-product generalized sampling operator $S_{n}^{\chi}(f)$ is non-increasing on each sub-interval $\left[\frac{j}{n}+\frac{1}{2n}, \frac{j}{n}+\frac{1}{n}\right]$, $j \in \{0,1,...,n-1\}$. 
\end{teorema}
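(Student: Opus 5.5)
By Lemma \ref{LemmaSPP}, on $[\frac{j}{n},\frac{j}{n}+\frac{1}{2n}]$ the denominator of (\ref{operatori[0,1]}) is $\chi(nx-j)$. So there
\[
S_{n}^{\chi}(f)(x)=\bigvee_{k=0}^{n} f\left(\frac{k}{n}\right)\frac{\chi(nx-k)}{\chi(nx-j)},
\]
since $\chi(nx-j)>0$ is independent of $k$. The same holds on $[\frac{j}{n}+\frac{1}{2n},\frac{j}{n}+\frac{1}{n}]$ with $j$ replaced by $j+1$. A maximum of finitely many non-decreasing (non-increasing) functions is again non-decreasing (non-increasing). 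We therefore aim to show that, in case $(i)$, every term $f(\frac{k}{n})\chi(nx-k)/\chi(nx-j)$ that can actually attain the maximum is non-decreasing in $x$ on the first half-interval. Case $(ii)$ is the mirror image.

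For $(i)$ we split the indices. For $k\ge j$, set $u=nx-j\in[0,\frac12]$, so $nx-k=u-(k-j)$. For $k=j$ the ratio equals $1$. For $k>j$ we have $nx-k\le 0$, and $|nx-k|=(k-j)-u$ decreases as $u$ grows. Hence $\chi(nx-k)$ is non-decreasing by $(\chi4)$ and $(\chi5)$, while the denominator $\chi(u)$ is non-increasing and positive. The ratio is thus non-decreasing, and since $f(\frac{k}{n})\ge 0$ these terms are non-decreasing.

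The main obstacle is the indices $k<j$, whose ratio $\chi(u+(j-k))/\chi(u)$ need not be monotone. Here we use that $f$ is non-decreasing: $f(\frac{k}{n})\le f(\frac{j}{n})$. Also $|nx-k|=u+(j-k)\ge u$, so $\chi(nx-k)\le\chi(nx-j)$ by $(\chi4)$ and $(\chi5)$. Therefore
\[
f\left(\frac{k}{n}\right)\chi(nx-k)\le f\left(\frac{j}{n}\right)\chi(nx-j),
\]
and the terms with $k<j$ never exceed the $k=j$ term. We can drop them and write
\[
S_{n}^{\chi}(f)(x)=\bigvee_{k=j}^{n} f\left(\frac{k}{n}\right)\frac{\chi(nx-k)}{\chi(nx-j)},
\]
a maximum of non-decreasing functions. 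This proves $(i)$.

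For $(ii)$, on $[\frac{j}{n}+\frac{1}{2n},\frac{j}{n}+\frac{1}{n}]$ set $v=nx-(j+1)\in[-\frac12,0]$. Since $f$ is non-increasing, $f(\frac{k}{n})\le f(\frac{j+1}{n})$ for $k>j+1$. In addition $|nx-k|=(k-j-1)-v\ge|v|$ for such $k$, so those terms are dominated by the $k=j+1$ term and can be discarded. For $k\le j$ we have $nx-k=v+(j+1-k)\ge 0$, which grows with $x$, so $\chi(nx-k)$ is non-increasing. Meanwhile $\chi(v)$ is non-decreasing in $v\le 0$, so $1/\chi(v)$ is non-increasing. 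Each remaining ratio is therefore non-increasing, and so is their maximum. Only the monotonicity in $(\chi4)$ and $(\chi5)$ is used; the $C^1$ part of $(\chi3)$ is not needed beyond continuity.
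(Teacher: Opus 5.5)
Your proof is correct and has the same structure as the paper's. Both rewrite $S_{n}^{\chi}(f)$ via Lemma \ref{LemmaSPP} as a maximum of ratios. Both then discard the indices $k<j$ (resp.\ $k>j+1$), since by the monotonicity of $f$ they are dominated by $f(\frac{j}{n})$ (resp.\ $f(\frac{j+1}{n})$), and finally show that the remaining ratios are monotone. The only difference is that last step: the paper checks the sign of $g'_{k,n,j}$ and $h'_{k,n,j}$ by the quotient rule using $\chi\in C^{1}(\R)$, while you argue directly that a non-negative monotone numerator over a positive, oppositely monotone denominator is monotone, which shows that neither the $C^{1}$ hypothesis nor even continuity is needed here.
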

\begin{proof}
By using Lemma \ref{LemmaSPP}, we can write $S_{n}^{\chi}(f)(x)$ as follows:
\begin{equation*}
S_{n}^{\chi}(f)(x)=\bigvee_{k=0}^{n} \frac{\chi(nx-k)}{\chi(nx-j)}  \cdot f\left(\frac{k}{n}\right), \ x \in \left[\frac{j}{n}, \frac{j}{n}+\frac{1}{2n}\right],
\end{equation*}
and 
\begin{equation*}
S_{n}^{\chi}(f)(x)=\bigvee_{k=0}^{n} \frac{\chi(nx-k)}{\chi(nx-(j+1))} \cdot f\left(\frac{k}{n}\right), \ x \in \left[\frac{j}{n}+\frac{1}{2n}, \frac{j}{n}+\frac{1}{n}\right],
\end{equation*}
for any $j \in \{0,1,...,n-1\}$, $n \in \N^{+}$. \newline $(i)$ Let $f:[0,1] \rightarrow \R^{+}_{0}$ be a non-decreasing function on $[0,1]$ and let $j \in \{0,1, ..., n-1\}$, $n \in \N^{+}$, be fixed. If $x \in [\frac{j}{n}, \frac{j}{n} + \frac{1}{2n}]$ then, by Lemma \ref{LemmaSPP}, we have:
\begin{equation*}
\frac{\chi(nx-k)}{\chi(nx-j)} \cdot f\left(\frac{k}{n}\right) \le f\left(\frac{k}{n}\right) \le f\left(\frac{j}{n}\right),
\end{equation*} 
for all $k \le j$, which implies that: 
\begin{equation*}
\begin{split}
S_{n}^{\chi}(f)(x)&= \max\left\{\bigvee_{k=0}^{j}\frac{\chi(nx-k)}{\chi(nx-j)}\cdot f\left(\frac{k}{n}\right), \bigvee_{k=j+1}^{n}\frac{\chi(nx-k)}{\chi(nx-j)}\cdot f\left(\frac{k}{n}\right)\right\} \\
&=\max\left\{f\left(\frac{j}{n}\right), \bigvee_{k=j+1}^{n} \frac{\chi(nx-k)}{\chi(nx-j)} \cdot f\left(\frac{k}{n}\right) \right\}, \ x \in \left[\frac{j}{n}, \frac{j}{n}+\frac{1}{2n}\right].
\end{split}
\end{equation*}
Now, for each $k >j$ let us consider the function
\begin{equation*}
g_{k,n,j}: \left[\frac{j}{n}, \frac{j}{n} +\frac{1}{2n}\right] \rightarrow \R^{+}_{0}
\end{equation*}
\begin{equation*}
g_{k,n,j}(x):=\frac{\chi(nx-k)}{\chi(nx-j)}\cdot f\left(\frac{k}{n}\right).
\end{equation*}
By computing the first derivative of $g_{k,n,j}(x)$, which exists since $\chi \in C^{1}(\R)$ by $(\chi3)$, we obtain:
\begin{equation*}
g'_{k,n,j}(x)=\frac{n[\chi'(nx-k)\chi(nx-j)-\chi'(nx-j)\chi(nx-k)]}{(\chi(nx-j))^{2}} \cdot f\left(\frac{k}{n}\right),  \ x \in \left[\frac{j}{n}, \frac{j}{n}+\frac{1}{2n}\right]. 
\end{equation*}
If $k \ge j+1$ then $nx-k \le j-k +\frac{1}{2} \le -\frac{1}{2}$ and $nx-j \ge 0$ for all $x \in  [\frac{j}{n}, \frac{j}{n}+\frac{1}{2n}]$, thus, by the property $(\chi5)$ and the fact that $\chi(x)$ is non-negative by $(\chi3)$, we have $\chi'(nx-k)\chi(nx-j) \ge 0$ and $\chi'(nx-j)\chi(nx-k) \le 0$. Since this implies that $g'_{k,n,j}(x) \ge 0$ for every $x \in \left[\frac{j}{n}, \frac{j}{n}+\frac{1}{2n}\right]$ and $k >j$, $S_{n}^{\chi}(f)$ is non-decreasing on $[\frac{j}{n}, \frac{j}{n}+\frac{1}{2n}]$ as a maximum of non-decreasing functions.
\newline $(ii)$ Now, we suppose that $f:[0,1] \rightarrow \R^{+}_{0}$ is a non-increasing function on $[0,1]$. Let $j \in \{0,1,..., n-1\}$, $n \in \N^{+}$, be fixed. If $x \in \left[\frac{j}{n}+\frac{1}{2n}, \frac{j}{n}+\frac{1}{n}\right]$, by using again Lemma \ref{LemmaSPP}, we can write what follows: 
\begin{equation*}
\frac{\chi(nx-k)}{\chi(nx-(j+1))} \cdot f\left(\frac{k}{n}\right) \le f\left(\frac{k}{n}\right) \le f\left(\frac{j+1}{n}\right),
\end{equation*}   
for all $k>j$. Therefore, we immediately get: 
\begin{equation*}
\begin{split}
S_{n}^{\chi}(f)(x)&= \max\left\{\bigvee_{k=0}^{j}\frac{\chi(nx-k)}{\chi(nx-(j+1))}\cdot f\left(\frac{k}{n}\right), \bigvee_{k=j+1}^{n}\frac{\chi(nx-k)}{\chi(nx-(j+1))}\cdot f\left(\frac{k}{n}\right)\right\} \\
& = \max\left\{ \bigvee_{k=0}^{j}\frac{\chi(nx-k)}{\chi(nx-(j+1))}\cdot f\left(\frac{k}{n}\right), f\left(\frac{j+1}{n}\right)\right\}, \ x \in \left[\frac{j}{n}+\frac{1}{2n}, \  \frac{j}{n}+\frac{1}{n}\right].
\end{split}
\end{equation*}
Now, for each $k \le j$ we consider the function
\begin{equation*}
h_{k,n,j}: \left[\frac{j}{n}+\frac{1}{2n}, \frac{j}{n}+\frac{1}{n}\right] \rightarrow \R^{+}_{0}
\end{equation*}
\begin{equation*}
h_{k,n,j}(x):=\frac{\chi(nx-k)}{\chi(nx-(j+1))}\cdot f\left(\frac{k}{n}\right). 
\end{equation*}
We have: 
\begin{equation*}
h_{k,n,j}'(x)=\frac{n[\chi'(nx-k)\chi(nx-(j+1))-\chi'(nx-(j+1))\chi(nx-k)]}{(\chi(nx-(j+1)))^{2}} \cdot f\left(\frac{k}{n}\right), \ x \in \left[\frac{j}{n}+\frac{1}{2n}, \frac{j}{n}+\frac{1}{n}\right]. 
\end{equation*}
If $k \le j$ then $nx-k \ge j-k+\frac{1}{2} \ge \frac{1}{2}$ and $ nx-(j+1) \le 0$ for all $x \in \left[\frac{j}{n}+\frac{1}{2n}, \frac{j}{n}+\frac{1}{n}\right]$, thus, we get $\chi'(nx-k)\chi(nx-(j+1)) \le 0$ and $\chi'(nx-(j+1))\chi(nx-k) \ge 0$, since $\chi(x)$ is non-increasing for $x \ge 0$ and non-decreasing for $x <0$ by $(\chi5)$, respectively, together with assumption $\chi(x) \ge 0$ for every $x \in \R$ of $(\chi3)$. This implies that $h_{k,n,j}'(x) \le 0$ for all $x \in \left[\frac{j}{n}+\frac{1}{2n} \frac{j}{n}+\frac{1}{n}\right]$ and $k \le j$, hence, $S_{n}^{\chi}(f)$ is non-increasing on $\left[\frac{j}{n}+\frac{1}{2n}, \frac{j}{n}+\frac{1}{n}\right]$ as a maximum of non-increasing functions. This completes the proof.     
\end{proof}
\begin{remark}
$(a)$ Suppose that $f:[0,1]\rightarrow \mathbb{R}_{0}^{+}$ is a bounded and non-decreasing function on $[0,1]$, and let $j \in \{0,1,\dots,n-2\}$, with $n \in \mathbb{N}^{+}$, be fixed. Reasoning as in the proof of Theorem \ref{TeoremaSPP} $(i)$, we have:
\begin{equation*}
	S_{n}^{\chi}(f)(x)=\max\left\{f\left(\frac{j+1}{n}\right),\bigvee_{k=j+2}^{n}\frac{\chi(nx-k)}{\chi(nx-(j+1))}\cdot f\left(\frac{k}{n}\right)\right\},
\end{equation*}  
for every $x \in \left[\frac{j}{n}+\frac{1}{2n},\frac{j}{n}+\frac{1}{n}\right]$. Now, setting 
\begin{equation*}
	h_{k,n,j}(x):=\frac{\chi(nx-k)}{\chi(nx-(j+1))}\cdot f\left(\frac{k}{n}\right), \quad  x \in \left[\frac{j}{n}+\frac{1}{2n},\frac{j}{n}+\frac{1}{n}\right],
\end{equation*}
with $k>j+1$, it is not possible to determine the sign of $h'_{k,n,j}(x)$ under assumptions $(\chi1)-(\chi5)$ on the kernel. It follows that the monotonicity of $h_{k,n,j}(x)$, for $k>j+1$, and consequently that of $S_{n}^{\chi}(f)(x)$, remains unknown on each interval $ \left[\frac{j}{n}+\frac{1}{2n},\frac{j}{n}+\frac{1}{n}\right]$, for all $j \in \{0,1,\dots,n-2\}$, with $n \in \mathbb{N}^{+}$, whenever $\chi$ is a general kernel function. \newline Similarly, let $f:[0,1]\rightarrow\mathbb{R}_{0}^{+}$ be a bounded and non-increasing function, and let $j \in \{1,2,\dots,n-1\}$, with $n \in \mathbb{N}^{+}$, be fixed. Repeating the same arguments as in the proof of Theorem \ref{TeoremaSPP} $(ii)$, we obtain:
\begin{equation*}
	S_{n}^{\chi}(f)(x)=\max\left\{\bigvee_{k=0}^{j-1} \frac{\chi(nx-k)}{\chi(nx-j)} \cdot f\left(\frac{k}{n}\right), f\left(\frac{j}{n}\right) \right\},
\end{equation*}  
for every $x \in \left[\frac{j}{n}, \frac{j}{n} +\frac{1}{2n}\right]$. Now, defining 
\begin{equation*}
	g_{k,n,j}(x):=\frac{\chi(nx-k)}{\chi(nx-j)}\cdot f\left(\frac{k}{n}\right), \quad x \in \left[\frac{j}{n}, \frac{j}{n} +\frac{1}{2n}\right],
\end{equation*}
with $k<j$, it is not possible to determine the sign of $g'_{k,n,j}(x)$ whenever $\chi$ is a general kernel function satisfying assumptions $(\chi1)-(\chi5)$. Therefore, the monotonicity of $g_{k,n,j}(x)$, for $k<j$, and consequently that of $S_{n}^{\chi}(f)(x)$, cannot be established on each interval $\left[\frac{j}{n}, \frac{j}{n} +\frac{1}{2n}\right]$, for all $j \in \{1,2,\dots,n-1\}$, with $n \in \mathbb{N}^{+}$, without specifying the explicit form of the kernel.  \\
$(b)$ In all the above results, we considered kernel functions satisfying the assumptions $(\chi i)$, $i=1,2,3,4,5$. In particular, the shape-preserving properties established in the previous theorem are mainly based on the properties $(\chi3)-(\chi5)$, as these are crucial in the proof of both Lemma \ref{LemmaSPP} and Theorem \ref{TeoremaSPP}. A classic class of functions that satisfy the above assumptions are the (smooth) centered bell-shaped functions, as defined in \cite{cardaliaguet1992approximation} by Cardaliaguet and Euvrard. \newline The archetypal (centered) bell-shaped function is the Gaussian function  $\chi_{g}(x):=ae^{-x^{2}/2b^{2}}$, $a$, $b >0$, i.e., the probability density function of a normally distributed random variable with zero expected value. Another typical example of a centered bell-shaped function is the well-known hyperbolic secant $\chi_{h}(x):=sech(x)=\frac{2}{e^{x}+e^{-x}}$. Clearly, both $\chi_{g}$ and $\chi_{h}$ are smooth (non-negative) functions, and they satisfy all the assumptions $(\chi i)$ for $i=1,2,3,4,5$. \newline In order to find other examples of kernels to which the above theory can be applied, it is useful to recall an interesting property of bell-shaped functions: namely, the primitive of any bell-shaped function is a sigmoidal function. Therefore, for example, the derivative of the logistic function $\chi_{l}(x):= e^{-x}/(1+e^{-x})^{2}$ and the derivative of the arctangent function $\chi_{a}(x):=1/(1+x^{2})$ (including its scaled versions) are both bell-shaped, $C^{\infty}$, even and centered. The function defined by:
\begin{equation*}
\chi(x):=\begin{cases}
\displaystyle \frac{2}{\pi}\arctan\left(\frac{1}{x^{2}}\right), & x \ne 0, \\
1, & x=0.
\end{cases}
\end{equation*}
also satisfies assumptions $(\chi1)-(\chi5)$. 
\newline Finally, we conclude by providing an example of a duration-limited kernel, that is, one with compact support, which can be used to construct a family of max-product operators that satisfies Theorem \ref{TeoremaSPP}. We can consider the following centered bell-shaped function:
\begin{equation}
\label{centralb-spline}
\chi_{b}(x):= \begin{cases} \displaystyle \frac{3}{4}-x^2, & \displaystyle  \assolutol x\assolutor \le \frac{1}{2},\\
	\displaystyle \frac{1}{2}\left(\frac{3}{2}-\assolutol x \assolutor\right)^2, & \displaystyle \frac{1}{2} < \assolutol x \assolutor \le \frac{3}{2},\\
\displaystyle 	0, & \displaystyle \assolutol x \assolutor > \frac{3}{2},
\end{cases}
\end{equation}  
which is the so-called central B-spline of order $3$. Note that $\chi_{b}(x)$ is $C^{1}$, even and with $\text{supp}(\chi_{b})\subset[-\frac{3}{2},\frac{3}{2}]$. 
\end{remark}

\section*{Acknowledgments}

{\small The authors are members of the Gruppo Nazionale per l'Analisi Matematica, la Probabilit\`a e le loro Applicazioni (GNAMPA) of the Istituto Nazionale di Alta Matematica (INdAM), of the Gruppo UMI (Unione Matematica Italiana) T.A.A. (Teoria dell'Approssimazione e Applicazioni)}, and of the network RITA (Research ITalian network on Approximation).


\section*{Conflict of interest/Competing interests}

{\small The authors declare that they have no conflict of interest and competing interest.}

\section*{Availability of data and material and Code availability}

{ \small Not applicable.}


\end{document}